\newtheorem{theorem}{Theorem}
\newtheorem{claim}{Claim}
\newtheorem{corollary}{Corollary}
\newdefinition{definition}{Definition}
\newproof{example}{\textbf{Example}}
\newtheorem{lemma}{Lemma}
\newproof{notation}{{\bf Notation}}
\newtheorem{proposition}{Proposition}
\newdefinition{remark}{Remark}
\newproof{proof}{{\bf Proof}}
\numberwithin{equation}{section}
\newcommand{\I}{\infty}
\renewcommand{\S}{\sigma}
\newcommand{\T}{\tilde}
\newcommand{\N}{\nabla}
\renewcommand{\L}{\lambda}
\newcommand{\LL}{\varLambda}
\renewcommand{\r}{\varrho}
\renewcommand{\P}{\partial}
\newcommand{\B}{\beta}
\newcommand{\D}{\delta}
\newcommand{\Lap}{\triangle}
\renewcommand{\O}{\overline}
\newcommand{\A}{\alpha}
\newcommand{\lng}{\langle}
\newcommand{\rng}{\rangle}
\newcommand{\omg}{\omega}
\newcommand{\V}{\varepsilon}
\newcommand{\vp}{\varphi}
\renewcommand{\t}{\tau}
\renewcommand{\th}{\theta}
\newcommand{\QED}{\qquad\mbox{\qed}}
\newcommand{\U}{\underline}
\newcommand{\R}{\mathbb{R}}
\newcommand{\CB}{\mathcal{B}}
\newcommand{\CG}{\mathcal{G}}
\newcommand{\CE}{\mathcal{E}}
\newcommand{\CF}{\mathcal{F}}
\newcommand{\CL}{\mathcal{L}}
\newcommand{\CH}{\mathcal{H}}
\newcommand{\CK}{\mathcal{K}}
\newcommand{\CP}{\mathcal{P}}
\newcommand{\CD}{\mathcal{D}}
\newcommand{\CM}{\mathcal{M}}
\renewcommand{\CG}{\mathcal{G}}
\journal{}
\begin{document}

\begin{frontmatter}
\title{Instantaneous blow-up versus global well-posedness and asymptotic behavior for linear pseudoparabolic equations}

\author[chu]{Sujin Khomrutai\corref{cor1}}
\ead{sujin.k@chula.ac.th}

\address[chu]{Department of Mathematics and Computer Science, Faculty of Science, Chulalongkorn University, Bangkok 10330, Thailand}

\cortext[cor1]{Corresponding author.}

\begin{keyword}
Linear pseudoparabolic equation \sep Unbounded and non-autonomous \sep Instantaneous blow-up \sep Convolution multiplication operator \sep Global well-posedness  \sep Comparison principle and asymptotic

\MSC[2010] 35B44 \sep 35B51 \sep 35A01 \sep 35A02 \sep 35K70
\end{keyword}

\begin{abstract}

We study linear pseudoparabolic equations with unbounded and time-dependent coefficients. We solve the case which has remained open in several recent studies of pseudoparabolic equations with unbounded and time-dependent coefficients. In this work we get a sharp condition for the existence or non-existence of solutions. Conditions on the initial function and coefficient are provided so that every nontrivial positive solution blows up instantaneously. When the coefficient and the initial function do not grow too rapidly, we establish the existence and uniqueness of global solutions, for both time-independent and time-dependent potentials. This is done via the analysis of the Bessel convolution multiplication operators. Asymptotic behavior and comparison principles are also established. The global well-posedness results can be extended to the equation with convection. 

\end{abstract}

\end{frontmatter}

\section{Introduction}

We study the linear pseudoparabolic Cauchy problem
\begin{align}\label{Eqn:Lin}
\begin{cases}
\P_tu-\Lap\P_tu=\Lap u+a(x,t) u&x\in\R^n,t>0,\\
\qquad u(x,0)=u_0(x)&x\in\R^n,
\end{cases}
\end{align}
and look for nontrivial solutions $u=u(x,t)\not\equiv0$. In this work, the coefficient of the lower-order term (or the \textit{potential}) is allowed to be time-dependent and unbounded. Our existence results can be extended to the more general pseudoparabolic equation with a convection term:
\begin{align}\label{Eqn:LConv}
\P_tu-\Lap\P_tu=\Lap u+\mathbf{b}(x,t)\cdot\N u+a(x,t)u,
\end{align}
where the coefficient vector $\mathbf{b}$ can be unbounded and time-dependent as well. The key idea employed to study (\ref{Eqn:LConv}) has already appeared in the investigation of (\ref{Eqn:Lin}). \smallskip

The left hand side of (\ref{Eqn:Lin}) has been arisen in various important contexts such as the {viscous diffusion equation} \cite{NovickPego91}, the {viscous Cahn-Hilliard equation} \cite{ElliottStuart96}, or pseudoparabolic regularization of the heat equation \cite{BarenBertPassoUghi93} etc.\ These are nonlinear or even higher order and degenerate equations describing various phenomena. The knowledge of the simpler looking equation (\ref{Eqn:Lin}) supplemented with those of the semilinear equations (see \cite{KhomrutaiNA15}, \cite{KhomrutaiJDE16}) should enable us to gain better understanding of such nonlinear phenomena, especially for unbounded and time-dependent coefficient equations.\smallskip

Eq.\ (\ref{Eqn:Lin}) and (\ref{Eqn:LConv}) are closely related to the linear (convective) heat equation
\begin{align*}
\P_tu=\Lap u+\mathbf{b}(x,t)\cdot\N u+a(x,t)u\quad
\end{align*}
or more generally the parabolic equation 
\begin{align*}
\P_tu=\CL_{x,t} u+a(x,t)u,\quad \CL_{x,t}=\sum a^{ij}(x,t)\P_{ij}+\mathbf{b}(x,t)\cdot\nabla.
\end{align*}
There has been a great number of studies for the linear heat and the parabolic equations with unbounded or singular and time-dependent coefficients, see for example, \cite{AronsonBesala66}, \cite{AronsonBesala67}, \cite{BrezisCabre98}, \cite{CabreMartel99}, \cite{Eidelman}, \cite{Friedman64}, \cite{KrylovPriola09}, \cite{Murata95}. Various aspects such as the existence or non-existence of solutions, uniqueness classes, fundamental solutions, comparison principles, regularity of solutions are discussed. The instantaneous blow-up phenomenon is also observed in \cite{CabreMartel99} when the equation has a singular lower-order coefficient. See also \cite{MitPohoz01,PohozaevTesei02} for applications of the celebrated test function method in the investigation of the same phenomena under various circumstances. It should be noted that the test function method however cannot be applied to the equations in this work. For linear pseudoparabolic equations, however, very few results have been known. This may be explianed from the complicated Green function. To the best of the author's knowledge, \cite{CosnerRundell83} is the only study in this direction, where some existence results are established based upon parabolic equation technique from \cite{AronsonBesala67}. For quite complete studies of the semilinear pseudoparabolic equation, see \cite{KhomrutaiNA15}, \cite{KhomrutaiJDE16}. See also \cite{AlshinKorpSvesh11} for a discussion of several important phenomena described by pseudoparabolic or more generally Sobolev equations.
\smallskip

In this work, the problem (\ref{Eqn:Lin}) will be analysed as a non-local evolution equation
\begin{align}\label{Eqn:Noloc}
\P_t\mu=\CB(V(\cdot,t)\mu)\quad(t>0),\quad\mu|_{t=0}=u_0,
\end{align}
where 
\begin{align}
\mu:=e^tu
\end{align}
and
\begin{align}
\CB:=(1-\Lap)^{-1}=\CF^{-1}\left\{(1+|\xi|^2)^{-1}\right\},\quad V:=a(x,t)+1.
\end{align}
When the problem is autonomous, i.e.\ $V$ is independent of time, then the question of existence or non-existence and the asymptotic behavior for solutions of (\ref{Eqn:Noloc}) will be reduced to the exploration of the (pointwise) limit of the series
\begin{align}\label{Exp:time-ind}
e^{t\dot\CB V}u_0:=\sum_{N=0}^\I\frac{t^N}{N!}\dot\CB V^N u_0,
\end{align}
where $\dot{\CB}V$, called the Bessel \textit{convolution multiplication operator}, and the iterations $\dot{\CB}V^N$ are defined by
\begin{align}\label{Bconv}
\begin{cases}
\displaystyle
\dot{\CB}V\vp:=\CB(V\vp)=\int_{\R^n}B(x-y)V(y,t)\vp(y)dy,\\
\vspace{-10pt}\\
\dot{\CB}V^N\vp:=\dot{\CB}V^{N-1}(\dot\CB V\vp)\quad(N\geq2).
\end{cases}
\end{align}

The main difficulty in studying (\ref{Exp:time-ind}) comes from the unboundedness of the potential, which implies that $\dot{\CB}V^N$ are unbounded operators on any weighted Lebesgue spaces $L^q(\R^n,\lng\cdot\rng^s)$ ($1\leq q\leq\I,s\in\R$). This is in contrast to the study of semilinear pseudoparabolic equation
\begin{align}\label{Eqn:Semil}
\P_tu-\Lap\P_tu=\Lap u+a(x,t)u^p,
\end{align}
where it was shown in \cite{KhomrutaiNA15,KhomrutaiJDE16} that certain powers $s$ can always be adopted so that the following key estimate for the source term holds:
\[
\|\lng\cdot\rng^s\dot{\CB}V\vp^p\|_{L^q}\lesssim\|\lng\cdot\rng^s\vp\|_{L^{q}}^p\quad\mbox{when $p\neq1$}.
\]
Then, standard contraction mapping principle techniques for parabolic equation (see for instance, \cite{Henry81}) can be applied to establish the existence of solutions for the semilinear equations within weighted Lebesgue spaces. More importantly, it was observed in \cite{KhomrutaiNA15,KhomrutaiJDE16} that, asymptotically, every nontrivial positive solution $u$ to Eq.\ (\ref{Eqn:Semil}) satisfies
\begin{align*}
u(x,t)\begin{cases}
\displaystyle\lesssim|x|^{-\A}&\mbox{for some $\A\geq\frac{\S_+}{p-1}$, 
if $p>1$},\\
\vspace{-10pt}\\
\displaystyle\gtrsim|x|^{\frac{\S_+}{1-p}}&\mbox{if $0<p<1$},
\end{cases}
\end{align*}
for all $t>0$. Thus, the solutions in the sublinear case $u\to\I$ as $p\to1^-$ whereas, in the superlinear case $u\to0$ as $p\to1^+$. Changing the weight to other weight functions, such as, the exponential functions, does not seem to solve the difficulty in the case $p=1$. In this work, the difficulty in the case $p=1$ is tackled by directly studying the pointwise convergence of the series (\ref{Exp:time-ind}). A part of this work is inspired by the general parametrix construction technique in \cite{DeckKruse02}, see also \cite{Friedman64}.
\smallskip

For non-autonomous equation, i.e.\ time-dependent potentials, we have to analyze the linear operators
\begin{align}
\begin{cases}
\displaystyle
\widehat{\CB}V(t)\vp:=\int_0^t\CB(V(\cdot,\t)\vp)\,d\t=\int_0^t\int_{\R^n}B(x-y)V(y,\t)\vp(y)\,dyd\t,\\
\vspace{-10pt}\\
\displaystyle
\widehat{\CB}V(t)^N\vp:=\widehat{\CB}V(t)^{N-1}(\widehat{\CB}V(t)\vp)\quad(N\geq2).
\end{cases}
\end{align}
The existence of solutions in this case is reduced to the convergence of the series
\begin{align}\label{Exp:time-dep}
e^{\int_0^t\dot{\CB}V(\t)\,d\t}u_0:=\sum_{N=0}^\I\widehat{\CB}V(t)^Nu_0.
\end{align}
Assuming the locally boundedness in time for the potential, one can establish the convergence using the results in the study of (\ref{Exp:time-ind}).\smallskip

In the study of instantaneous blow-up, or complete blow-up, we use the pointwise estimate involving the Bessel potential operator $\CB$ developed in \cite{KhomrutaiJDE16}. It will be revealed that the existence and uniqueness class for (\ref{Eqn:Lin}) is
\[
\CE=\left\{\vp\in L^1_{loc}(\R^n):\int_{\R^n}e^{-\r|x|}|\vp(x)|\,dx<\I,\,\,\mbox{for some $\r\in[0,1)$}\right\}.
\]
The potential also affects the blow-up behavior. We will show that if the potential has a sufficiently large spacial growth rate or a sufficiently large total change in time, then instantaneous blow-up occurs. A precise condition on the initial function and the potential will be given.
\smallskip

Now let us explain the main results of this work. We present notation and some elementary results in Section \ref{Sec:Prelim}. Then in Section \ref{Sec:Nonexist}, we investigate some non-existence results both instantaneous blowing up and non-existence of global solutions. We also give the lower bound for the assymptotic behavior. Throughout this section, we assume that the initial function satisfies
\begin{align*}
\liminf_{|x|\to\I}\frac{\log u_0}{|x|^\A}\geq\delta
\end{align*}
for some $\A,\delta\in\R$ and the potential satisfies
\begin{align}\label{Hyp:a}
a(x,t)\geq\L(t)|x|^\S\quad\mbox{for all $|x|\geq R_0,t>0$},\tag{H1}
\end{align}
where $\S\in\R$ and $R_0>0$ ($R_0\geq0$ if $\S\geq0$) are constants, and $\L=\L(t)>0$ is a locally bounded function. Conditions on $u_0,\S$, and $\L(t)$ are given so that every nontrivial nonnegative solution of (\ref{Eqn:Lin}) blows up instantaneously. In the case $\S=1$, we also prove the nonexistence of global solutions. We study the Bessel convolution multiplication operators in Section \ref{Sec:Besconv}. The convergence of (\ref{Exp:time-ind}) is proved and we introduce the Green operator. For this section, we consider arbitrary function $V=V(x,t)$ (not necessarily positive) satisfying
\begin{align}
|V(x,t)|\leq\LL(t)|x|^\S+1
\end{align}
where $\LL\in B_{loc}([0,T))$ and $0<T\leq\I$, i.e.\ $\sup_{[0,\t_0]}\LL<\I$ for all $\t_0<T$. Here, we have obtained the idea of using Lemma \ref{Lem:Ex1} and considering infinite products from \cite{DeckKruse02}. In Section \ref{Sec:ExistComp} we establish the existence and uniqueness of global solutions assuming that the potential satisfies
\begin{align}\label{Hyp:Exist}
|a(x,t)|\leq\LL(t)|x|^\S\quad\mbox{on $\R^n\times[0,\I)$}.\tag{H2}
\end{align}
We split the case of time-independent and time-dependent potential. 
Comparison principles are also proved for nonnegative potentials. An extension of the existence and uniqueness results to (\ref{Eqn:LConv}) is given in the Appendix. The equation is considered in a Volterra integral type equation and we assume that $\mathbf{b}$ is $C^1$ and both $|\mathbf{b}|$, $|\N\cdot\mathbf{b}|$ satisfy the same pointwise condition as the potential.

\section{Preliminaries}\label{Sec:Prelim}

\subsection{Notation and definitions}

\begin{notation}\
\begin{itemize}
\item[(i)] $Q_T=\R^n\times[0,T)$, $0<T\leq\I$. 
\item[(ii)] $\CB=(1-\Lap)^{-1}$ is the Bessel potential operator with kernel function $B$. 
\item[(iii)] $\displaystyle\dot\CB V\vp:=\int_{\R^n}B(x-y)V(y)\vp(y)dy$, where $V=V(x)$.
\item[(iv)] $\displaystyle\widehat{\CB}V(t)\vp:=\int_0^t\int_{\R^n}B(x-y)V(y,\t)\vp(y)dy\,d\t$, where $V=V(x,t)$. 
\item[(iv)] For $l\in\R$, we denote
\[
\begin{cases}
\displaystyle\,\liminf_{|x|\to\I}g(x)\geq l^\uparrow\,\quad\Leftrightarrow\quad\mbox{$g(x)\geq l$ on some $\{|x|\geq R\}$},\\
\displaystyle \limsup_{|x|\to\I}g(x)\leq l^\downarrow\quad\Leftrightarrow\quad\mbox{$g(x)\leq l$ on some $\{|x|\geq R\}$},
\end{cases}
\]
where $R>0$.
\item[(v)] If $u=u(x,t)$ we denote $u(t):=u(\cdot,t)$.
\item[(vi)] $\displaystyle\varPhi_\r g:=e^{-\r|\cdot|}\ast g=\int_{\R^n}e^{-\r|x-y|}g(y)dy$
\end{itemize}
\end{notation}

The solutions of (\ref{Eqn:Lin}) will be searched within the following space of functions
\begin{align}
\CE=\bigcup_{\r\in[0,1)}\CE_\r,
\end{align}
where, for each $\r\in[0,1)$,
\begin{align}
\CE_\r:=\left\{g\in L^1_{loc}(\R^n):\int_{\R^n}e^{-\r|x-y|}|g(y)|dy<\I\right\}=L^1(\R^n,e^{-\r|\cdot|}).
\end{align}

\begin{definition}

A solution of (\ref{Eqn:Lin}) is $u\in C([0,T);\CE)$, for some $0<T\leq\I$, such that $\mu:=e^tu$ satisfies
\begin{align}\label{Eqn:Milds}
\mu(t)=u_0+\int_0^t\CB(V(\cdot,\t)\mu(\t))d\t\quad\mbox{on $Q_T$}.
\end{align}
Here $\CB$ is the \textit{Bessel potential operator} where the kernel function $B$ is defined through $K_\nu$, the modified Bessel function of the second kind, by
\[
B(x)=|x|^{1-\frac{n}{2}}K_{\frac{n}{2}-1}(|x|).
\]

\end{definition}





\subsection{Some basic estimates}

\begin{lemma}\label{Lem:tech}

Let $x,y\geq0$.
\begin{enumerate}
\item[(i)] If $\A\in[0,1]$ then $(x+y)^\A\leq x^\A+y^\A$.
\item[(ii)] If $k\in[1,\I)$ then $(x+y)^k\leq2^k(x^k+y^k)$.
\end{enumerate}

\end{lemma}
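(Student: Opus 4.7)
The lemma is a pair of standard power inequalities, so the plan is to prove each part by a short, self-contained argument and not rely on any machinery.

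For part (i), the plan is to reduce to the normalized case. If $x + y = 0$ both sides vanish, so assume $x + y > 0$ and set
\[
s = \frac{x}{x+y}, \qquad t = \frac{y}{x+y},
\]
so that $s, t \in [0,1]$ and $s + t = 1$. Dividing the desired inequality by $(x+y)^\alpha$, the claim becomes $1 \leq s^\alpha + t^\alpha$. Because $0 \leq s \leq 1$ and $\alpha \in [0,1]$, the inequality $s^\alpha \geq s$ holds (the function $r \mapsto r^\alpha - r$ is nonnegative on $[0,1]$, as it vanishes at $0$ and $1$ and is concave). The same bound applies to $t$, and adding yields $s^\alpha + t^\alpha \geq s + t = 1$, which is exactly what we need. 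Alternatively one can invoke concavity of $r \mapsto r^\alpha$ together with $0^\alpha = 0$, but the elementary comparison above avoids even that.

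For part (ii), the plan is an even shorter bound via the maximum. Assume without loss of generality that $x \geq y$, so that $x + y \leq 2x = 2\max(x,y)$. Then
\[
(x+y)^k \leq 2^k \max(x,y)^k \leq 2^k (x^k + y^k),
\]
since $\max(x,y)^k \leq x^k + y^k$ when both arguments are nonnegative. (If one prefers a sharper constant, convexity of $r \mapsto r^k$ on $[0,\infty)$ gives $((x+y)/2)^k \leq (x^k + y^k)/2$, hence $(x+y)^k \leq 2^{k-1}(x^k + y^k)$, but the weaker form stated in the lemma suffices for later use.)

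There is no real obstacle here; both parts are two-line computations. The only thing worth being a little careful about in part (i) is ruling out the degenerate case $x = y = 0$ before dividing, and in part (ii) the WLOG reduction $x \geq y$ is harmless by symmetry of the statement.
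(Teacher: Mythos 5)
Your proof is correct and matches the paper's approach: the paper dispatches (i) by ``elementary calculus'' (your normalization to $s+t=1$ and the bound $s^\A\geq s$ on $[0,1]$ is exactly such an argument) and proves (ii) via $x+y\leq 2\max\{x,y\}$, which is precisely what you use. Both parts check out, including the handling of the degenerate case $x+y=0$ in (i).
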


\begin{proof}
(i) can be proved by elementary calculus and (ii) is true by the fact that $x+y\leq2\max\{x,y\}$.\QED
\end{proof}

We next present the two-sided estimates for $B$.

\begin{lemma}[\cite{KhomrutaiNA15},\cite{KhomrutaiJDE16},\cite{Nikolsskii75}]\label{Lem:B2side}

The Bessel potential kernel satisfies the following estimates
\begin{align}\label{Est:B2side}
\U b(x)e^{-|x|}\lesssim B(x)\lesssim\O b(x)e^{-|x|}
\end{align}
where
\begin{align*}
\U b(x):=\begin{cases}
\displaystyle|x|^{\frac{1-n}{2}}&\mbox{if $n\neq2$ or $|x|\geq1$},\\
\vspace{-10pt}\\
\displaystyle
1-\ln|x|&\mbox{if $n=2$ and $|x|<1$},
\end{cases}\quad
\O b(x)=\begin{cases}
|x|^{\frac{1-n}{2}}&|x|\geq1,\\
\vspace{-10pt}\\
1-\ln|x|&|x|<1\,\,\mbox{and $n=2$},\\
\vspace{-10pt}\\
|x|^{2-n}&|x|<1\,\,\mbox{and $n\geq3$},\\
\vspace{-10pt}\\
1&|x|<1\,\,\mbox{and $n=1$}. 
\end{cases}
\end{align*}

\end{lemma}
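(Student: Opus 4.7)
The plan is to reduce the claim to the classical asymptotic behavior of the modified Bessel function $K_\nu$ near $0$ and $\infty$. By definition $B(x) = |x|^{1-n/2} K_{n/2-1}(|x|)$, so it suffices to package the known two-sided bounds for $K_\nu$ into the piecewise envelopes $\underline{b}$ and $\overline{b}$.

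First I would handle the exterior regime $\{|x|\geq 1\}$. The classical expansion
\[
K_\nu(r) = \sqrt{\pi/(2r)}\,e^{-r}\bigl(1 + O(1/r)\bigr)\quad\text{as }r\to\infty,
\]
together with the positivity and smoothness of $K_\nu$ on $(0,\infty)$, yields $K_\nu(r)\asymp r^{-1/2} e^{-r}$ uniformly on $[1,\infty)$. Multiplying by $|x|^{1-n/2}$ gives $B(x)\asymp |x|^{(1-n)/2}\,e^{-|x|}$ on $\{|x|\geq 1\}$, which matches both $\underline{b}$ and $\overline{b}$ in this region, regardless of dimension.

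Next, for the interior regime $\{|x|<1\}$, I would split by dimension to invoke the appropriate local expansion of $K_\nu$ at the origin. For $n=1$ the exact identity $K_{1/2}(r)=\sqrt{\pi/(2r)}\,e^{-r}$ gives $B(x)=\sqrt{\pi/2}\,e^{-|x|}$, comparable to the constant envelope $\overline{b}\equiv 1$ and to $\underline{b}(x)=|x|^{0}=1$. For $n=2$ the logarithmic expansion $K_0(r)=-\ln r+O(1)$ yields $B(x)\asymp 1-\ln|x|$. For $n\geq 3$, with $\nu=n/2-1>0$, the power expansion $K_\nu(r)\sim 2^{\nu-1}\Gamma(\nu)\,r^{-\nu}$ gives $B(x)\sim c_n |x|^{2-n}$. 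Since $e^{-|x|}$ is bounded above and below by positive constants on $\{|x|\leq 1\}$, the upper bounds $B(x)\lesssim \overline{b}(x)\,e^{-|x|}$ follow at once. For the lower bound with $n\geq 3$, note that $\underline{b}(x)=|x|^{(1-n)/2}$ is less sharp near the origin than the true behavior $|x|^{2-n}$, but still dominated by it on $\{|x|<1\}$, so $\underline{b}(x)\,e^{-|x|}\lesssim B(x)$ remains valid.

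The main technical point, rather than a conceptual obstacle, is gluing the exterior and interior estimates across the transition radius $|x|=1$: since the ratio of $K_\nu(r)$ to its leading asymptotic is continuous and strictly positive on $(0,\infty)$, it is bounded away from $0$ and $\infty$ on any compact set, which lets the transition region be absorbed into the implicit constants. Combining the two regimes then delivers the stated bounds (\ref{Est:B2side}).
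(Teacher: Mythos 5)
The paper does not actually prove this lemma: it is imported by citation from \cite{KhomrutaiNA15}, \cite{KhomrutaiJDE16}, and \cite{Nikolsskii75}, so there is no internal proof to compare against. That said, your argument is the standard one and is correct. You correctly reduce to the classical two-sided asymptotics of $K_\nu$ at $0$ and at $\infty$, glue across $|x|=1$ via continuity and strict positivity of the ratio on compacts, and observe that for $n\geq3$ the lower envelope $|x|^{(1-n)/2}$ is weaker than the true local behavior $|x|^{2-n}$ (with equality at $n=3$), so the lower bound holds a fortiori. The only cosmetic point worth flagging is that for $n=1$ the relevant order is $\nu=n/2-1=-1/2$; you invoked $K_{1/2}$, which is fine because $K_{-\nu}=K_\nu$, but that identity is being used silently. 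Everything else, including absorbing $e^{-|x|}$ into the constants on $\{|x|\leq1\}$ and using $1-\ln|x|$ rather than $-\ln|x|$ to keep the $n=2$ envelope positive up to $|x|=1$, is sound.
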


In the study of nonexistence results, the following two estimates involving the Bessel potential operator will be used often.

\begin{lemma}\label{Lem:Non1}

Let $\D\in\R$, $\A\in[0,1]$, and $\gamma\in(0,1)$. There is $d_0=d_0(n,\D,\A,\gamma)>0$ such that if $d\geq d_0$ then
\begin{align}
\CB\left(|\cdot|^de^{\D|\cdot|^\A}\right)\geq \gamma|x|^de^{\D|x|^\A}.
\end{align}

\end{lemma}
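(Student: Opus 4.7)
The plan is to use the identity $\CB f = f + \CB(\Lap f)$, which follows from $(1-\Lap)B = \delta_0$ and the commutativity of $\CB$ with $\Lap$, and then analyze $\Lap f$ for $f(y) := |y|^d e^{\D|y|^\A}$. A direct radial calculation gives
\[
\Lap f(y) = |y|^{d-2}\,e^{\D|y|^\A}\,P(|y|^\A), \qquad P(s) := (\D\A)^2 s^2 + \D\A(2d+n-2+\A)\,s + d(d+n-2).
\]
When $\D \geq 0$ (in particular when $\A = 0$), every coefficient of $P$ is nonnegative once $d\geq 1$, so $P \geq 0$ on $[0,\I)$, making $f$ subharmonic. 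Since $B\geq 0$ gives $\CB(\Lap f)\geq 0$, we obtain $\CB f \geq f \geq \gamma f$ for free.

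The substantive case is $\D < 0,\; \A \in (0,1]$. Here $P$ is a convex quadratic with two positive roots $s_-<s_+$, and the quadratic formula gives $s_\pm \sim d/(|\D|\A)$ as $d\to\I$. Thus $\Lap f \leq 0$ precisely on the annulus $A := \{r_- \leq |y| \leq r_+\}$ with $r_\pm := s_\pm^{1/\A}$. Using $|y|\geq r_-$ on $A$ together with the vertex value $|P|_\I := -\min_{s} P(s) = d\A + (n-2+\A)^2/4 = O(d)$, one gets the key pointwise estimate
\[
(\Lap f)_-(y) \;\leq\; \frac{|P|_\I}{r_-^2}\,f(y)\qquad \text{on }\R^n
\]
(trivial off $A$, since $(\Lap f)_-$ vanishes there). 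Applying the positive operator $\CB$ to this inequality yields $\CB((\Lap f)_-)(x) \leq (|P|_\I/r_-^2)\,\CB f(x)$. Writing $\CB f = f + \CB((\Lap f)_+) - \CB((\Lap f)_-) \geq f - \CB((\Lap f)_-)$ and rearranging produces the crucial bound
\[
\CB f(x) \;\geq\; \frac{f(x)}{1 + |P|_\I/r_-^2}.
\]

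To finish, observe that $r_-^2 \sim (d/|\D|\A)^{2/\A}$ while $|P|_\I = O(d)$, so the ratio is $O(d^{1-2/\A})$ and tends to $0$ as $d\to\I$, because $2/\A \geq 2 > 1$ for $\A\in(0,1]$. We therefore choose $d_0 = d_0(n,\D,\A,\gamma)$ large enough that $|P|_\I/r_-^2 \leq (1-\gamma)/\gamma$, which is exactly $\CB f \geq \gamma f$. The main obstacle will be to convert the asymptotic $s_-\sim d/(|\D|\A)$ into an explicit, quantitative lower bound on $r_-^2$ so that $d_0$ can be written down in terms of $n,\D,\A,\gamma$; once that is in hand, the rest is routine bookkeeping.
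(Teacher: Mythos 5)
Your proof is correct and follows essentially the same strategy as the paper: compute the radial Laplacian of $f=|x|^d e^{\D|x|^\A}$, show that $\Lap f\geq -\kappa f$ for a small $\kappa$ once $d$ is large, and then apply the positive operator $\CB$ to convert this pointwise bound into $\CB f\geq\gamma f$. The one genuine difference is in how that lower bound is established in the substantive case $\D<0$, $\A\in(0,1]$. The paper writes $\kappa F+\Lap F\geq Q(r^\A)r^{d-2}e^{\D r^\A}$ by absorbing the term $\kappa r^2\geq\kappa r^{2\A}$ (valid for $r\geq1$, $\A\leq1$, with $r<1$ treated separately) into the quadratic $Q$, and then shows the discriminant of $Q$ becomes negative as $d\to\I$, so $Q>0$ everywhere. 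You instead isolate the negative part of $\Lap f$, bound it by the vertex value $|P|_\I=d\A+(n-2+\A)^2/4=O(d)$ divided by $r_-^2$ where $r_-\sim(d/|\D|\A)^{1/\A}$ is the inner radius of the negativity annulus, and observe that $|P|_\I/r_-^2=O(d^{1-2/\A})\to0$. Both routes are valid and give the same quantitative behaviour; the paper's discriminant computation is slightly more streamlined because it delivers the needed inequality in one stroke, whereas yours requires converting the asymptotic $s_-\sim d/(|\D|\A)$ into an explicit lower bound on $r_-$, which you correctly flag as remaining bookkeeping.
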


\begin{proof}
We denote $F=r^de^{\D r^\A}$, where $r=|x|$, and $\kappa:=\gamma^{-1}-1>0$. We show that $\kappa F+\Lap F\geq0$ provided $d$ is large enough. Consider
\begin{align}
\Lap F&=e^{\D r^\A}\Lap r^d+2(r^d)'(e^{\D r^\A})'+r^d\Lap e^{\D r^\A},\nonumber\\
&=\left[(r^d)''+\frac{n-1}{r}(r^d)'\right]e^{\D r^\A}+2d\D\A r^{d+\A-2}e^{\D r^\A}+\left[(e^{\D r^\A})''+\frac{n-1}{r}(e^{\D r^\A})'\right]r^d,\nonumber\\
&=\left(\D^2\A^2r^{2\A}+\D\A(2d+\A+n-2)r^\A+d(d+n-2)\right)r^{d-2}e^{\D r^\A}.\label{Lem1Est:1}
\end{align}
If $\D\geq0$ or $\A=0$ then 
\[
\kappa F+\Lap F\geq0\quad\mbox{provided $d+n-2\geq0$}.
\]
For $r<1$, $\kappa F+\Lap F\geq0$ by taking $d$ sufficiently large.\smallskip

Assume $\D<0$, $\A>0$, and $r\geq1$. Since $\A\leq1$, we get from (\ref{Lem1Est:1}) that
\begin{align*}
\kappa F+\Lap F\geq Q(r^\A)r^{d-2}e^{\D r^\A},
\end{align*}
where 
\[
Q(X)=(\kappa+\D^2\A^2)X^2+\D\A(2d+\A+n-2)X+d(d+n-2).
\]
The discriminant of $Q$ is
\begin{align*}
\varDelta=4d^2\D^2\A^2\left[\left(1+\frac{\A+n-2}{2d}\right)^2-\left(1+\frac{\kappa}{\D^2\A^2}\right)\left(1+\frac{n-2}{d}\right)\right].
\end{align*}
Since $\varDelta<0$ as $d\to\I$, we obtain by taking $d$ sufficiently large that
\[
\kappa F+\Lap F\geq0\quad(\mbox{on $\R^n$}).
\]
This implies
\[
\gamma^{-1}F\geq F-\Lap F,
\]
hence by taking $\CB$ we conclude that $\CB F\geq \gamma F$. This implies what we want.\QED
\end{proof}

We also need the following lemma where the proof is almost the same as \cite{KhomrutaiJDE16}.

\begin{lemma}\label{Lem:BlowerExp}

Let $d\geq0$ and $\A\in[0,1]$. Assume $\D\in\R$ if $\A<1$ and $\D\in(-\I,1)$ if $\A=1$. There is a constant $\eta_0=\eta_0(n,\D,\A,R)>0$ such that
\begin{align}\label{Est:BLalpha}
&\CB\left(|\cdot|^de^{\D|\cdot|^\A}\cdot1_{\R^n\setminus B_{R}}\right)\geq \eta_02^{-d}|x|^{d}e^{\D|x|^\A}.
\end{align}

\end{lemma}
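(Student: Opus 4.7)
The plan is to estimate the convolution
\[
\CB\bigl(|\cdot|^de^{\D|\cdot|^\A}\cdot 1_{\R^n\setminus B_R}\bigr)(x)=\int_{|y|\geq R}B(x-y)|y|^de^{\D|y|^\A}\,dy
\]
from below by restricting the integration to a carefully chosen region and applying the pointwise lower bound $B(z)\gtrsim\U b(z)e^{-|z|}$ of Lemma \ref{Lem:B2side}. I would split into two regimes according to the size of $|x|$.

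\emph{Case 1: $|x|\geq M_0:=\max\{2,R+1\}$.} I restrict the integration to the thin annulus $K_x:=\{y:1/2\leq|y-x|\leq 1\}$. Since $|y-x|\leq 1\leq|x|/2$, one gets $|y|\geq|x|/2\geq R$, so the indicator is $1$ on $K_x$ and $|y|^d\geq 2^{-d}|x|^d$; this is precisely what produces the $2^{-d}$ factor in the statement. Next, $B(x-y)\geq c_1(n)>0$ because $\U b$ is bounded below on the compact set $\{1/2\leq|z|\leq 1\}$ and $e^{-|x-y|}\geq e^{-1}$. Finally, by subadditivity of $t\mapsto t^\A$ (Lemma \ref{Lem:tech}(i)) one has $\bigl||y|^\A-|x|^\A\bigr|\leq|y-x|^\A\leq 1$, so $e^{\D|y|^\A}\geq e^{-|\D|}e^{\D|x|^\A}$. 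Integrating over $K_x$ (whose volume is a positive constant depending only on $n$) yields a bound of the form $c(n,\D,\A)\cdot 2^{-d}|x|^de^{\D|x|^\A}$.

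\emph{Case 2: $|x|\leq M_0$.} Here $|x|^de^{\D|x|^\A}$ is uniformly bounded in terms of $d,\D,\A,R$. I would integrate instead over the fixed annulus $A:=\{R+M_0+1\leq|y|\leq R+M_0+2\}$, which lies in $\R^n\setminus B_R$ and is separated from the ball $\{|x|\leq M_0\}$. On $A$ the distance $|x-y|$ ranges over a fixed compact subset of $(0,\infty)$ uniformly in $|x|\leq M_0$, so $B(x-y)\geq c_2>0$ by continuity of $B$ away from the origin; moreover $|y|^d\geq(R+M_0+1)^d$ and $e^{\D|y|^\A}$ is bounded below by a positive constant depending on $\D,\A,R$. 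Since $2(R+M_0+1)/M_0\geq 2$, the resulting constant lower bound beats the target $\eta_0 2^{-d}|x|^de^{\D|x|^\A}$ for every $d\geq 0$ once $\eta_0=\eta_0(n,\D,\A,R)$ is chosen small enough.

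Taking $\eta_0$ to be the minimum of the two constants produced above yields the claim. I expect the main difficulty to be purely bookkeeping the various constants through the case split, together with verifying the uniform H\"older-type inequality $\bigl||y|^\A-|x|^\A\bigr|\leq|y-x|^\A$ that lets the exponential factor be absorbed into a constant depending only on $\D$; no new analytical device is required beyond Lemma \ref{Lem:B2side} and Lemma \ref{Lem:tech}(i). The hypothesis $\D<1$ when $\A=1$ plays no active role in this pointwise lower bound; it is presumably inherited from integrability constraints used elsewhere in the analysis.
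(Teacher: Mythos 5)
Your proof is correct and uses exactly the same ingredients as the paper's: the subadditivity $\bigl||y|^\A-|x|^\A\bigr|\le|y-x|^\A$ from Lemma \ref{Lem:tech}(i) to pull out $e^{\D|x|^\A}$, the kernel lower bound of Lemma \ref{Lem:B2side}, and a case split on the size of $|x|$ with the $2^{-d}$ factor emerging from the large-$|x|$ regime. The only difference is cosmetic: the paper substitutes and restricts the resulting integral to $\{|y|\leq R\}$ (for $|x|\ge 2R$) or $\{|y|\geq 4R\}$ (for $|x|\le 2R$), whereas you restrict to the annulus $\{1/2\le|y-x|\le1\}$ or a fixed outer annulus, and your remark that the hypothesis $\D<1$ when $\A=1$ is not actually used in this lemma also agrees with the paper's proof.
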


\begin{proof}

By Lemma \ref{Lem:tech} (i) we have $|x|^\A-|y|^\A\leq|x-y|^\A\leq|x|^\A+|y|^\A$ which implies $\D|x-y|^\A\leq\D|x|^\A+|\D||y|^\A$. Then by applying the lower estimate in Lemma \ref{Lem:B2side} we get
\begin{align*}
\CB\left(|\cdot|^de^{\D|\cdot|^\A}\cdot1_{\R^n\setminus B_{R}}\right)
&=\int_{|x-y|\geq R}B(y)|x-y|^{d}e^{\D|x-y|^\A}dy,\\
&\geq\int_{|x-y|\geq R}\U b(y)e^{-|y|}|x-y|^{d}e^{\D|x|^\A-|\D||y|^\A}dy,\\
&=e^{\D|x|^\A}\int_{|x-y|\geq R}\U b(y)e^{-(|y|+|\D||y|^\A)}|x-y|^{d}dy=:e^{\D|x|^\A}K.
\end{align*}
We can assume $R\geq1$. If $|x|\leq 2R$ then for $|y|\geq4R$ we get $|x-y|\geq\max\{2R,|x|\}$, hence
\begin{align*}
K&\geq\omg_n|x|^{d}\int_{4R}^{\I}\U b(r)e^{-(r+|\D|r^\A)}r^{n-1}dr=:\eta_1|x|^{d}.
\end{align*}
On the other hand, if $|x|\geq2R$ then for $|y|\leq R$ we get $|x-y|\geq|x|/2\geq R$, hence
\begin{align*}
K&\geq\omg_n2^{-d}|x|^{d}\int_0^{R}\U b(r)e^{-(r+|\D|r^\A)}r^{n-1}dr=:\eta_22^{-d}|x|^{d}.
\end{align*}
Let $\eta_0=\eta_1\wedge \eta_2>0$. Then we obtain
\[
\CB\left(|\cdot|^de^{\D|\cdot|^\A}\cdot1_{\R^n\setminus B_{R}}\right)\geq\eta_02^{-d}|x|^{d}e^{\D|x|^\A},
\]
which is the desired estimate. 
\QED
\end{proof}




\section{Nonexistence of positive solutions}\label{Sec:Nonexist}

In this section we assume the solutions $u$ and initial condition $u_0$ of (\ref{Eqn:Lin}) are nonnegative functions. We start with the following fact for the nontrivial nonnegative solutions. It is true owing to the nonlocal nature of the equation and that the Bessel potential kernel is decaying like $e^{-|x|}$, see (\ref{Est:B2side}).

\begin{lemma}[\cite{KhomrutaiJDE16}, Lemma 8]\label{Lem:Non2}

Let $0\lneq u\in C(Q_T)$ be a solution of (\ref{Eqn:Lin}) in $Q_T$. Assume $\t_0\in(0,T)$ and $\D>1$. Then
\begin{align}\label{Decay:explin}
u(\t_0)\gtrsim e^{-\D|x|}\quad\mbox{as $|x|\to\I$},\quad\mbox{or, equivalently},\quad\liminf_{|x|\to\I}\frac{\log u(\t_0)}{|x|}\geq-\D^\uparrow.
\end{align}

\end{lemma}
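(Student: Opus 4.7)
The plan is to exploit the mild form $\mu(\t_0) = \mu(t_1) + \int_{t_1}^{\t_0} \CB(V(s)\mu(s))\,ds$ (with $\mu = e^t u$ and $V = 1 + a$) together with the lower estimate $B(z) \gtrsim \U b(z) e^{-|z|}$ on the Bessel kernel. Since $u \in C([0,T); \CE)$ is nonnegative and nontrivial, continuity in $t$ with values in $L^1_{loc}$ produces an intermediate time $t_1 \in (0, \t_0)$, constants $\V, \eta > 0$, and a ball $B_R(x_0) \subset \R^n$ such that $\mu(y, s) \geq \V$ on $B_R(x_0)$ for $s \in [t_1, t_1+\eta] \subset (0, \t_0)$.

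For $|x|$ large and $y \in B_R(x_0)$, Lemma \ref{Lem:B2side} gives $B(x-y) \gtrsim |x-y|^{(1-n)/2} e^{-|x-y|} \gtrsim |x|^{(1-n)/2} e^{-|x|}$. Since this quantity dominates $e^{-\D|x|}$ for any $\D > 1$ when $|x|$ is sufficiently large, the positive contribution
\[
\int_{t_1}^{t_1+\eta}\!\int_{B_R(x_0)} B(x-y)(1+a(y,s))\mu(y,s)\,dy\,ds
\]
would be bounded below by $c_\D\, e^{-\D|x|}$, provided $V = 1+a$ is bounded below by a positive constant on the compact set $B_R(x_0) \times [t_1, t_1+\eta]$. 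When $x_0$ can be taken in $\{|x|\geq R_0\}$, hypothesis (\ref{Hyp:a}) supplies $1 + a \geq 1 + \L(s)|y|^\S > 0$ there (after enlarging $|x_0|$ if $\S < 0$), and the conclusion follows.

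The main obstacle I anticipate is the possible negativity of $V$ on the localization set $B_R(x_0)$ when no pointwise lower bound on $a$ is available on bounded sets, since this makes the mild form sign-indefinite. My resolution is to bootstrap: use the equation on a short subinterval to propagate the positive lower bound $\mu(s,\cdot) \gtrsim e^{-|\cdot|}$ from the initial positivity set to a region far from the origin where \eqref{Hyp:a} gives $V \geq 1 > 0$, and then iterate this argument on the remaining time interval up to $\t_0$. Since the author has carried out a closely analogous construction in \cite{KhomrutaiJDE16}, Lemma 8, for the semilinear equation, the plan is essentially to adapt that argument to the present linear pseudoparabolic setting; the $\D > 1$ in the statement reflects precisely the exponential margin needed to absorb the losses incurred in the bootstrap.
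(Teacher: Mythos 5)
Your localization-plus-kernel-bound strategy is the right one and is essentially the argument that lies behind the cited Lemma~8 of \cite{KhomrutaiJDE16}: pick $(x_0,t_1)$ with $\mu>0$, use joint continuity (you need $u\in C(Q_T)$ from the lemma's hypothesis, not merely $t$-continuity into $L^1_{loc}$) to get $\mu\geq\V$ on $B_R(x_0)\times[t_1,t_1+\eta]$, and then push the lower Bessel estimate of Lemma~\ref{Lem:B2side} through the Duhamel integral and absorb the polynomial factor $|x|^{(1-n)/2}$ by the margin $\D-1>0$.

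The place where your write-up has a genuine gap is the sign argument. You localize the concern to ``negativity of $V$ on the localization set $B_R(x_0)$,'' but that is not where the difficulty is. To pass from
\[
\mu(\t_0,x)=\mu(t_1,x)+\int_{t_1}^{\t_0}\!\!\int_{\R^n}B(x-y)V(y,\t)\mu(y,\t)\,dy\,d\t
\]
to
\[
\mu(\t_0,x)\;\geq\;\int_{t_1}^{t_1+\eta}\!\!\int_{B_R(x_0)}B(x-y)V(y,\t)\mu(y,\t)\,dy\,d\t,
\]
you must discard both the tail $t\in[t_1+\eta,\t_0]$ and the spatial region $\R^n\setminus B_R(x_0)$, and that is legitimate only if $B\,V\,\mu\geq0$ \emph{everywhere}, i.e.\ only if $V\geq0$ globally, not merely on the localization ball. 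The bootstrap you sketch does not repair this: to ``propagate the lower bound $\mu(s,\cdot)\gtrsim e^{-|\cdot|}$'' you would need to run exactly the same throw-away step, so the argument is circular as written. In fact no bootstrap is needed, because the section is implicitly working with $V\geq1$ everywhere. This is visible in the paper's own proof of Theorem~\ref{Thm:Lower}, where the inequality $V\geq1+\L(t)|x|^\S\cdot1_{\R^n\setminus B_{R_0}}$ (which in particular asserts $V\geq1$ on $B_{R_0}$) is cited as a consequence of (\ref{Hyp:a}); that reading only makes sense if $a\geq0$ is understood throughout (and the cited Lemma~8 of \cite{KhomrutaiJDE16} is indeed proved under a nonnegativity assumption on the potential). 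Under $a\geq0$ the whole issue you flag disappears: $V\geq1$, the integrand is pointwise nonnegative, the Duhamel truncation is valid, and the rest of your computation closes the proof. So the fix is simply to invoke (and, ideally, make explicit) the standing assumption $a\geq0$, rather than to add a bootstrap.
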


\begin{remark} By (\ref{Decay:explin}) and the semigroup property for (\ref{Eqn:Lin}), we will assume without loss of generality that
\begin{align}\label{Hyp:u0pos}
u_0>0\quad\mbox{on $\R^n$}.\tag{H3}
\end{align}
Also observe that (\ref{Decay:explin}) implies that $u>0$ on $\R^n\times(0,\I)$.
\end{remark}

We note that $u\gtrsim\exp(l|x|^m)$ as $|x|\to\I$ is equivalent to that
\[
\liminf_{|x|\to\I}\frac{\log u}{|x|^m}\geq l.
\]

\begin{remark}\label{Rem:u0decay}\
\begin{enumerate}
\item[(i)] The estimate (\ref{Decay:explin}) implies that the fastest possible decay rate for any nontrivial solution of (\ref{Eqn:Lin}) at any time $t>0$ is the \textit{exp-linear} function
\[
e^{-(1+\V)|x|}\quad(\V>0),
\]
even if $u_0$ has a \textit{rapid (exp-superlinear) decay rate}: 
\[
\exp\left(\D|x|^{\A}\right)\quad(\D<0,\A>1).
\]
If $u_0$ has a rapid decay rate, we will consider instead a time $\t_0$-translated problem where the shifted initial condition $u(\t_0)$ satisfies (\ref{Decay:explin}). On the other hand, if $\D>0$ and $\A>1$, i.e.\ $u_0$ has a \textit{rapid (exp-superlinear) growth rate}, it will be shown in Proposition \ref{Prop:fastgrow} that solutions blow up completely regardless of the potential.
\item[(ii)] If $u_0$ has a \textit{slow (exp-sublinear) decay} ($\D<0$) or \textit{growth} ($\D\geq0$) \textit{rate}:
\[
\exp\left(\D|x|^{\A}\right)\quad(0\leq\A<1),
\]
then it will be shown that a solution exhibits, at any time $t>0$, at least the same rate. Finally, we note that, asymptotically
\[
\exp(\D|x|^{\A})\sim\mbox{a constant}>0\quad\mbox{if $\A<0$}.
\]
\end{enumerate}

\end{remark}

\begin{proposition}[Instantanneous blow-up I]\label{Prop:fastgrow}

Let $\CK$ be a cone-like domain or a tube domain with one end at infinity and $\D>0$, $\A\geq1$ be constants such that
\begin{align*}
\begin{cases}
\D\in(0,\I)&\mbox{if $\A>1$,}\\
\D\in[1,\I)&\mbox{if $\A=1$}.
\end{cases}
\end{align*}
If $u_0$ satisfies
\begin{align}
\liminf_{|x|\to\I,x\in\CK}\frac{\log u_0}{|x|^\A}\geq\D^\uparrow,
\end{align}
then (\ref{Eqn:Lin}) with arbitrary potential $a(x,t)\geq0$ has no solutions on any $Q_T$; in fact, every solution blows up completely.

\end{proposition}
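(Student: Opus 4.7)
The plan is to recast equation (\ref{Eqn:Lin}) in its mild formulation and exhibit that the first Duhamel iterate is already identically $+\I$; this forces any putative solution to fall outside the class $\CE$ instantaneously. Setting $\mu := e^t u$ and $V := a + 1$, so that $V \geq 1$ since $a \geq 0$, a nonnegative solution of (\ref{Eqn:Lin}) satisfies
\[
\mu(t) = u_0 + \int_0^t \CB\bigl(V(\cdot,\t)\mu(\t)\bigr)\,d\t.
\]
First I would observe $\mu \geq u_0 \geq 0$ by direct inspection of the Duhamel formula, and then bootstrap using $V \geq 1$ together with the positivity-preserving nature of $\CB$ to obtain the pointwise lower bound
\[
\mu(t) \geq u_0 + t\,\CB u_0 \quad\mbox{on }\R^n,
\]
valid for every $t>0$. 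Hence it suffices to show $\CB u_0(x) = +\I$ for every $x \in \R^n$.

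To establish $\CB u_0 \equiv +\I$, I would combine the lower kernel estimate $B(z) \gtrsim \U b(z) e^{-|z|}$ of Lemma \ref{Lem:B2side} with the growth hypothesis $u_0(y) \gtrsim \exp(\D|y|^\A)$ valid for $y \in \CK$ with $|y|$ large. For an arbitrary $x \in \R^n$, the geometric assumption that $\CK$ is cone-like or tube-like with one end at infinity produces a unit vector $\omg$, a threshold $T_0 = T_0(x)$, and a small solid angle $\varGamma$ around $\omg$ such that $\{x + s\omg' : s \geq T_0,\ \omg' \in \varGamma\} \subset \CK \cap \{|y| \geq R_0\}$. Changing to polar coordinates $y = x + s\omg'$ based at $x$ and using the triangle inequality $|x + s\omg'| \geq s - |x|$, the task reduces to confirming
\[
\int_{T_0}^\I s^{(n-1)/2} \exp\!\bigl(\D(s-|x|)^\A - s\bigr)\,ds = +\I.
\]

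The divergence is then read off case by case from the hypotheses on $(\D,\A)$: when $\A > 1$ and $\D > 0$, the exponent tends to $+\I$ super-linearly in $s$; when $\A = 1$ and $\D > 1$, it tends to $+\I$ linearly in $s$; and in the borderline case $\A = 1$, $\D = 1$, the exponent stabilizes at $-|x|$ while the algebraic prefactor $s^{(n-1)/2}$ supplies the divergence after integrating in the angular variable $\omg' \in \varGamma$. Once $\CB u_0 \equiv +\I$ is established, the bound $\mu(t) \geq t\,\CB u_0$ at any $t>0$ contradicts $\mu(t) \in \CE \subset L^1_{loc}(\R^n)$, so no nontrivial solution of (\ref{Eqn:Lin}) can exist on any $Q_T$, which is complete blow-up. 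I expect the principal technical obstacle to be the geometric step: extracting $\varGamma$ uniformly across both the cone-like and the tube-like forms of $\CK$, and verifying cleanly that $|x + s\omg'| \geq s - |x|$ on a neighborhood of $\omg$ from which the integral retains its positive solid-angle measure. Beyond that, the remaining work is a direct computation with the kernel estimate of Lemma \ref{Lem:B2side}.
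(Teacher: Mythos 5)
Your proposal is correct and follows essentially the same approach as the paper: both reduce the statement to the divergence $\CB u_0\equiv+\I$, obtained from $\mu(t)\geq t\,\CB u_0$ and the lower kernel estimate of Lemma~\ref{Lem:B2side} combined with $u_0\gtrsim e^{\D|y|^\A}\geq e^{|y|}$ on $\CK$. The only cosmetic difference is that the paper bounds $e^{-|x-y|}e^{|y|}\geq e^{-|x|}$ directly and integrates the residual power of $B$ over $\CK$, whereas you pass to polar coordinates centered at $x$ and do the case split on $(\D,\A)$ explicitly.
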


\begin{proof}
We note that $u\geq\CG(t)u_0:=\sum_{k=0}^\I\frac{1}{k!}\CB^ku_0$, where $\CG(t)$ denotes the pseudoparabolic green operator, which implies in particular that $u\geq\CB u_0$. Since $u_0\geq Ce^{|x|}$ as $|x|\to\I$, $x\in\CK$, we get
\[
u(x,t)\geq C\int_{|x-y|\geq1,y\in\CK,|y|\gg1}|x-y|^{\frac{1-n}{2}}e^{-|x-y|}e^{|y|}dy=\I,
\]
which is true for all $x\in\R^n$, $t>0$.\QED
\end{proof}

According Remark \ref{Rem:u0decay} (i) and Proposition \ref{Prop:fastgrow}, it remains to consider the behavior of solutions to (\ref{Eqn:Lin}) assuming that the initial function satisfies
\begin{align}\label{Hyp:u0alpha}
\begin{cases}
\displaystyle\liminf_{|x|\to\I}\frac{\log u_0}{|x|^\A}\geq\D^\uparrow&(\D\in\R,\A\leq1),\mbox{where}\\
\vspace{-10pt}\\
\D\in\R&\hspace{-20pt}\mbox{if $\A<1$ (slow decay/growth)},\\
\D\in(-\I,1)&\hspace{-20pt}\mbox{if $\A=1$ (exp-linear decay/growth)}.
\end{cases}
\end{align}
We prove the following a priori lower bound for solutions affected by the potential and the initial condition.

\begin{theorem}[Asymptotic behavior]
\label{Thm:Lower}

Assume (\ref{Hyp:a}), (\ref{Hyp:u0pos}), and (\ref{Hyp:u0alpha}). Let $0\lneq u\in C(Q_T)$ be a solution of (\ref{Eqn:Lin}) in $Q_T$ where $T<\I$. Then, for any $\t_0>0$ and $0<\V<1$, the following estimate is true
\begin{align}
\mu\gtrsim\exp\left((1-\V)c_{T}\L_\ast(t)|x|^\S+\D|x|^{\A_+}\right)\quad\mbox{on $\R^n\times[\t_0,T)$},
\end{align}
where
\begin{align}
c_{T}:=\min\left\{1,\frac{R_0^{-\S}}{\|\L\|_{L^\I(0,T)}}\right\}.
\end{align}

\end{theorem}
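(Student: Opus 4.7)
I would prove the estimate by iterating the integral form of the equation and summing the resulting Neumann-type series into an exponential.

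\medskip

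\emph{Preliminaries.} Combining (\ref{Hyp:u0pos}), (\ref{Hyp:u0alpha}), and continuity of $u_0$ on compacta, I would first extract a global lower bound $u_0(x)\geq C_0 e^{\D|x|^{\A_+}}$ on $\R^n$ for some $C_0>0$. Since $\mu$ and $V=a+1\geq 0$ are both nonnegative, (\ref{Eqn:Milds}) yields $\mu(t)\geq u_0$ for all $t\in[0,T)$. Using (\ref{Hyp:a}) together with the definition of $c_T$, I would then verify that $V(y,\t)\geq c_T\L(\t)|y|^\S$ holds on $\R^n\times[0,T)$ when $\S\geq 0$: on $\{|y|\geq R_0\}$ this follows from $c_T\leq 1$, while on $\{|y|<R_0\}$ it follows from $V\geq 1$ and $c_T R_0^\S\|\L\|_{L^\I(0,T)}\leq 1$. (The case $\S<0$ uses only the restricted bound $V\geq c_T\L(\t)|y|^\S\mathbf{1}_{|y|\geq R_0}$, with Lemma \ref{Lem:BlowerExp} replacing Lemma \ref{Lem:Non1} throughout.)

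\medskip

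\emph{Iteration and series summation.} Substituting the $V$-bound into (\ref{Eqn:Milds}) and iterating, while computing the nested time integrals as $\L_\ast(t)^k/k!$ with $\L_\ast(t):=\int_0^t\L(\t)\,d\t$, produces
\begin{equation*}
\mu(t)\geq\sum_{k=0}^\I\frac{(c_T\L_\ast(t))^k}{k!}T^ku_0,\qquad T:=\CB\circ M_{|y|^\S},
\end{equation*}
where $M_{|y|^\S}$ denotes multiplication by $|y|^\S$. Fixing $\gamma\in(1-\V,1)$ and letting $d_0=d_0(n,\D,\A_+,\gamma)$ be the constant from Lemma \ref{Lem:Non1}, I set $k_0:=\lceil d_0/\S\rceil$. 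For each $k\geq k_0$, iterated applications of Lemma \ref{Lem:Non1} (valid because (\ref{Hyp:u0alpha}) forces $\A_+\in[0,1]$ and $\D<1$ whenever $\A_+=1$) yield
\begin{equation*}
T^ku_0(x)\geq C_1\gamma^{k-k_0}|x|^{k\S}e^{\D|x|^{\A_+}},
\end{equation*}
where $C_1>0$ collects the constants produced by the first $k_0$ iterations, which I would bound by Lemma \ref{Lem:BlowerExp} (the restriction to $\{|y|\geq R_0\}$ there still yields a strictly positive lower bound). Substituting back and using $\sum_{k\geq k_0}z^k/k!=e^z-P_{k_0-1}(z)$ with $P_{k_0-1}$ polynomial of degree $k_0-1$ gives
\begin{equation*}
\mu(x,t)\gtrsim e^{\D|x|^{\A_+}}\Bigl(e^{\gamma c_T\L_\ast(t)|x|^\S}-P_{k_0-1}\bigl(\gamma c_T\L_\ast(t)|x|^\S\bigr)\Bigr).
\end{equation*}
For $|x|$ sufficiently large the polynomial is dominated by the exponential, so $\mu\gtrsim\exp(\gamma c_T\L_\ast(t)|x|^\S+\D|x|^{\A_+})$; since $\gamma>1-\V$, the desired bound holds on $\{|x|\geq R\}$ for some $R$. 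For bounded $|x|$, the strict positivity of $\mu(t)$ on compacta (available for $t\geq\t_0>0$ via Lemma \ref{Lem:Non2}) together with the boundedness of the target RHS yields the estimate by absorbing a constant into $\gtrsim$.

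\medskip

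\emph{Main obstacle.} The essential technical point is that Lemma \ref{Lem:Non1} requires $d\geq d_0$, so the first $k_0$ iterations fall outside its sharp range and must be controlled by the cruder Lemma \ref{Lem:BlowerExp}; these produce only a finite $k$-independent constant, so the leading exponential behavior survives. Showing that this lost block, together with the Taylor-remainder polynomial $P_{k_0-1}$, does not overwhelm $e^{\gamma c_T\L_\ast(t)|x|^\S}$ for $|x|$ large—and calibrating $\gamma$ as a function of $\V$ so that the final prefactor reads exactly $(1-\V)c_T\L_\ast(t)$—are the bookkeeping steps requiring careful attention.
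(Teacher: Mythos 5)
Your proposal is essentially correct and arrives at the same estimate via the same two technical tools, Lemmas \ref{Lem:Non1} and \ref{Lem:BlowerExp}, but the bookkeeping is organized differently from the paper's. The paper's proof proceeds in two phases separated by a time translation: Claim 1 iterates the integral equation from $t=0$ using Lemma \ref{Lem:BlowerExp} alone (paying the $2^{-N(N+1)\S/2}$ price at every step) solely to manufacture, at a fixed time $\t_0$, a lower bound of the form $C_1|x|^{d}e^{\D|x|^\A}$ with $d=\S N\geq d_0$; it then sets $v=u(\cdot+\t_0)$, notes that $W=1+a(\cdot,\cdot+\t_0)\geq c_T\L(\cdot+\t_0)|x|^\S$ on all of $\R^n$ (the same $c_T$ you introduce), and in Claim 2 runs a fresh iteration from $t=0$ for $v$ in which every step invokes Lemma \ref{Lem:Non1} with fixed $d$. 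You instead run a single iteration directly on $\mu$, identify the resulting expression as the Neumann-type series $\sum_k\frac{(c_T\L_\ast(t))^k}{k!}T^k u_0$, and split the series at $k_0=\lceil d_0/\S\rceil$: the first $k_0$ terms are controlled by Lemma \ref{Lem:BlowerExp} and produce a fixed prefactor, while the tail uses Lemma \ref{Lem:Non1} term by term to sum into $e^{\gamma c_T\L_\ast(t)|x|^\S}$ minus a low-degree polynomial. The single-pass version is arguably cleaner: it avoids the time translation entirely, and in particular it produces the bound with $\L_\ast(t)$ directly rather than $\L_\ast(t)-\L_\ast(\t_0)$ (the paper's translated formulation introduces the latter and silently identifies it with $\L_\ast(t)$, which is only exact for constant $\L$; your version sidesteps this). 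The trade-off is that you must handle the Taylor remainder polynomial, which you correctly absorb for $|x|$ large and then patch bounded $|x|$ by positivity. One small remark: your positivity-on-compacta step needs $\mu$ bounded away from zero on $\{|x|\leq R\}\times[\t_0,T)$; this comes more simply from $\mu\geq u_0>0$ via (\ref{Hyp:u0pos}) than from Lemma \ref{Lem:Non2}, which is an asymptotic statement as $|x|\to\I$. Also note that the whole scheme (yours and the paper's) tacitly assumes $\S>0$; for $\S=0$ neither Lemma \ref{Lem:Non1} applies nor is it needed, since the claimed bound collapses to $\mu\gtrsim e^{\D|x|^{\A_+}}$, which is already $\mu\geq u_0$.
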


\begin{proof}
In view of Remark \ref{Rem:u0decay} (ii), if $\A<0$ we get $u_0\geq\mbox{const}>0$. Then it can be seen from the proof below that $u_0$ does not affect all the estimates. We will assume that
\[
0\leq\A\leq1.
\]
Observe that the assumption (\ref{Hyp:u0alpha}) is the same as $u_0\geq C_0e^{\D|x|^\A}$.\smallskip

By (\ref{Hyp:a}), we have 
\begin{align}\label{Est:VLow}
V\geq1+\L(t)|x|^\S\cdot1_{\R^n\setminus B_{R_0}}.
\end{align}
Let $\eta_0>0$ be the constant in Lemma \ref{Lem:BlowerExp}. We prove the following claim.

\begin{claim}\label{Claim:Explower1}

For all positive integer $N$, we have 
\begin{align}
\mu\geq C_0\eta_0^N2^{-\frac{N(N+1)}{2}\S}\frac{1}{N!}\L_\ast(t)^N|x|^{\S N}e^{\D|x|^\A}
\end{align}
\end{claim}

\begin{proof}[\textbf{Claim}]
We employ Lemma \ref{Lem:BlowerExp} and (\ref{Est:VLow}). By (\ref{Eqn:Milds}) and that $u\geq u_0\geq C_0e^{\D|x|^\A}=:I_0$, we have
\begin{align*}
&\mu\geq \int_0^t(\dot\CB VI_0)(\t)d\t\geq\int_0^tC_0\eta_0\left(1+\L(\t)2^{-\S}|x|^\S\right)e^{\D|x|^\A}d\t,\\
&\hphantom{\mu}\geq C_0\eta_02^{-\S}\left(t+\L_\ast(t)|x|^\S\right)e^{\D|x|^\A}=:I_1,\\
&\mu\geq\int_0^t(\dot\CB VI_1)(\t)d\t\geq \int_0^tC_0\eta_0^22^{-\S}\left(1+\L(\t)2^{-\S}|x|^\S\right)\left(\t+\L_\ast(\t)2^{-\S}|x|^\S\right)e^{\D|x|^\A}d\t,\\
&\hphantom{\mu}\geq C_0\eta_0^22^{-3\S}\frac{1}{2!}\left(t+\L_\ast(t)|x|^\S\right)^2e^{\D|x|^\A}=:I_2,
\end{align*}
where we have applied the fact that
\[
1+\L(\t)2^{-\S}|x|^\S=\frac{\P}{\P\t}(\t+\L_\ast(\t)2^{-\S}|x|^\S).
\]
Similarly, we get
\begin{align*}
&\mu\geq\int_0^t(\dot\CB V I_2)(\t)d\t\geq \int_0^tC_0\eta_0^32^{-3\S}\frac{1}{2!}\left(1+\L(\t)2^{-\S}|x|^\S\right)\left(\t+\L_\ast(\t)2^{-\S}|x|^\S\right)^2e^{\D|x|^\A}d\t,\\
&\hphantom{\mu}\geq C_0\eta_0^32^{-6\S}\frac{1}{3!}\left(t+\L_\ast(t)|x|^\S\right)^3e^{\D|x|^\A}=:I_3.
\end{align*}
By induction, it follows that
\begin{align*}
\mu\geq C_0\eta_0^N2^{-\frac{N(N+1)}{2}\S}\frac{1}{N!}(t+\L_\ast(t)|x|^\S)^Ne^{\D|x|^\A}\qquad\forall\,N>0.
\end{align*}
This implies the desired estimate of the claim.\QED
\end{proof}

Let $\V\in(0,1)$ and $d_0=d_0(n,\D,\A,1-\V)$ as in Lemma \ref{Lem:Non1}. In the preceding claim, let us choose $N=N(\V)$ sufficiently large so that
\[
d:=\S N\geq d_0.
\]
Also fix $\t_0\in(0,T)$. It follows from the claim that
\[
\mu\geq C_1|x|^de^{\D|x|^\A}\quad\mbox{on $\R^n\times[\t_0,T)$},
\]
where $C_1:=C_0\eta_0^N2^{-(N(N+1)/2)\S}(1/N!)\L_\ast(\t_0)^N$. \smallskip

Let $v:=u(t+\t_0)$. So $v$ satisfies 
\[
\P_tv-\Lap\P_tv=\Lap v+a(x,t+\t_0)v,\quad v_0=u(\t_0).
\]
$a(\cdot,t+\t_0)$ satisfies the same hypotheses as $a(\cdot,t)$ with an obvious modification and 
\begin{align}
v_0\geq J_0:=C_2|x|^de^{\D|x|^\A}\quad\mbox{on $\R^n$},
\end{align}
where $C_2:=C_1e^{-\t_0}$. The lower bound (\ref{Est:VLow}) implies that $W:=1+a(x,t+\t_0)$ satisfies
\begin{align}
\begin{cases}
\displaystyle W\geq\LL(t)|x|^\S\quad(x\in\R^n,t>0),\quad\mbox{where}\\
\vspace{-10pt}\\
\displaystyle\LL(t)=c_T\L(t+\t_0)>0,\,\, c_T:=\min\left\{1,\frac{R_0^{-\S}}{\|\L\|_{L^\I(0,T)}}\right\}.
\end{cases}
\end{align}
Since $T<\I$, we have $c_T>0$ and $\LL_\ast(t)\in(0,\I)$ for all $t$.\smallskip

Next we repeatedly apply Lemma \ref{Lem:Non1} with $\gamma=1-\V$ to get the following result.

\begin{claim}\label{Claim:ExpLower2}
If $\nu=e^tv$, then we have
\begin{align}
\nu\geq C_2|x|^d\exp\left((1-\V)\LL_\ast(t)|x|^\S+\D|x|^\A\right),\quad\LL_\ast(t):=\int_0^t\LL(\t)d\t.
\end{align}
\end{claim}

\begin{proof}[\textbf{Claim}]
Since $\nu$ satisfies $\nu=v_0+\int_0^t(\dot\CB W\nu)(\t)d\t$ and $\nu\geq v_0\geq J_0$, we have by Lemma \ref{Lem:Non1} that
\begin{align*}
&\nu\geq J_0+\int_0^t(\dot\CB W J_0)(\t)d\t,\\
&\hphantom{\nu}\geq J_0+C_2(1-\V)\int_0^t\LL(\t)|x|^{\S+d}e^{\D|x|^\A}d\t,\\
&\hphantom{\nu}=J_0+C_2(1-\V)\LL_\ast(t)|x|^{\S+d}e^{\D|x|^\A}=:J_0+J_1,
\end{align*}
and
\begin{align*}
&\nu\geq J_0+\int_0^t(\dot\CB W(J_0+J_1))(\t)d\t\geq J_0+J_1+\int_0^t(\dot\CB W J_1)(\t)d\t,\\
&\hphantom{\nu}\geq J_0+J_1+C_2(1-\V)^2\int_0^t\LL(\t)\LL_\ast(\t)|x|^{2\S+d}e^{\D|x|^\A}d\t,\\
&\hphantom{\nu}\geq J_0+J_1+C_1\frac{1}{2!}((1-\V)\LL_\ast(t))^2|x|^{2\S+d}e^{\D|x|^\A}=:J_0+J_1+J_2.
\end{align*}
Similarly, we have
\begin{align*}
&\nu\geq J_0+\int_0^t(\dot\CB W(J_0+J_1+J_2))(\t)d\t\geq J_0+J_1+J_2+\int_0^t(\dot\CB WJ_2)(\t)d\t,\\
&\hphantom{\nu}\geq\sum_{k=0}^2J_k+C_2(1-\V)^3\int_0^t\LL(\t)\frac{1}{2!}\LL_\ast(\t)^2|x|^{3\S+d}e^{\D|x|^\A}d\t,\\
&\hphantom{\nu}=\sum_{k=0}^2J_k+C_2\frac{1}{3!}((1-\V)\LL_\ast(t))^3|x|^{3\S+d}e^{\D|x|^\A}=:\sum_{k=0}^3J_k.
\end{align*}
By induction we obtain for any positive integer $N$ that
\begin{align*}
\nu\geq\sum_{k=0}^NC_2\frac{1}{k!}((1-\V)\LL_\ast(t))^k|x|^{k\S+d}e^{\D|x|^\A}.
\end{align*}
This is true for all $N$, hence we obtain
\begin{align*}
\nu\geq C_2|x|^de^{\D|x|^\A}\sum_{k=0}^\I\frac{((1-\V)\LL_\ast(t))^k}{k!}|x|^{k\S}=C_2|x|^de^{(1-\V)\LL_\ast(t)|x|^\S+\D|x|^\A},
\end{align*}
which is the desired estimate.\QED
\end{proof}

Since $\mu(t+\t_0)=e^{\t_0}\nu$ it follows from the preceding claim that 
\begin{align*}
\mu(t+\t_0)&\geq C_1|x|^d\exp\left((1-\V)c_{T}\L_\ast(t)|x|^\S+\D|x|^\A\right).
\end{align*}
On $S=\left\{|x|\leq1,t\in[\t_0,T]\right\}$, we set 
\begin{align*}
m:=\min_S\mu(x,t),\quad
l:=\max_S\exp\left((1-\V)c_{T}\L_\ast(t)|x|^\S+\D|x|^d\right)
\end{align*}
which are positive real numbers. Then we have
\begin{align*}
\mu&\geq\begin{cases}
\displaystyle m&\mbox{if $(x,t)\in S$},\\
\vspace{-10pt}\\
\displaystyle C_1\exp\left((1-\V)c_{T}\L_\ast(t)|x|^\S+\D|x|^\A\right)&\mbox{if $|x|\geq1,t\in[\t_0,T]$}.
\end{cases}
\end{align*}
which implies that
\[
\mu(x,t)\geq C_3\exp\left((1-\V)c_{T}\L_\ast(t)|x|^\S+\D|x|^\A\right)\quad(x\in\R^n,t>\t_0),
\]
where $C_3:=C_1\min\{1,mC_1^{-1}l^{-1}\}$.\QED

\end{proof}

Our next aim is to prove some non-existence results for the Cauchy problem (\ref{Eqn:Lin}). 

\begin{theorem}
Assume (\ref{Hyp:a}) and (\ref{Hyp:u0pos}). Assume further that (\ref{Hyp:u0alpha}) is true.
\begin{enumerate}
\item[(i)] (\textbf{Instantaneous blow-up II}) If $\S>1$, then Eq.\ (\ref{Eqn:Lin}) has no nontrivial positive solution on any $Q_T$. In fact, every such solution blows up completely on $Q_T$.
\item[(ii)] (\textbf{Nonexistence of global solutions}) If $\S=1$ with $\L>0$ satisfies
\begin{align}\label{Hyp:LLnon}
\sup_{\t>0}c_{\t}\L_\ast(\t)>\begin{cases}
1&\mbox{if $\A<1$},\\
1-\D_-&\mbox{if $\A=1$},
\end{cases}
\end{align}
then Eq.\ (\ref{Eqn:Lin}) has no nontrivial global solution. 
\end{enumerate}

\end{theorem}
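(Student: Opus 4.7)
Both parts follow the same scheme: use Theorem~\ref{Thm:Lower} at a positive time $\t_\ast$ to produce a pointwise lower bound $u(\cdot,\t_\ast)\gtrsim\exp(\kappa|x|^\beta)$ whose exponent already fulfills the rapid-growth hypothesis of Proposition~\ref{Prop:fastgrow} (either $\beta>1$ with $\kappa>0$, or $\beta=1$ with $\kappa\geq1$); then apply Proposition~\ref{Prop:fastgrow} to the Cauchy problem time-shifted by $\t_\ast$ to force complete blow-up, contradicting the assumed existence.

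\textbf{Part (i).} Suppose $0\lneq u\in C(Q_T)$ solves (\ref{Eqn:Lin}) on $Q_T$ with $T<\I$ and $\S>1$. Fix $\t_0\in(0,T)$ and $\V\in(0,1)$. Theorem~\ref{Thm:Lower} gives
\[
\mu(\cdot,\t_0)\gtrsim\exp\bigl((1-\V)c_T\L_\ast(\t_0)|x|^\S+\D|x|^{\A_+}\bigr),
\]
with $(1-\V)c_T\L_\ast(\t_0)$ a strictly positive constant. Since $\A_+\leq1<\S$, the term $\D|x|^{\A_+}$ is absorbed into a slightly smaller coefficient of $|x|^\S$, so $u(\cdot,\t_0)\gtrsim e^{\kappa|x|^\S}$ on $\R^n$ for some $\kappa>0$. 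Viewing $u(\cdot,\cdot+\t_0)$ as a solution on $Q_{T-\t_0}$ with initial datum $u(\cdot,\t_0)$, Proposition~\ref{Prop:fastgrow} (with $\CK=\R^n$, $\A=\S>1$, $\D=\kappa$) forces its complete blow-up, contradicting the finiteness of $u$ on $Q_T$.

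\textbf{Part (ii) and main obstacle.} Suppose a nontrivial global positive solution $u$ exists. Pick $\t^\ast>0$ with $c_{\t^\ast}\L_\ast(\t^\ast)>1-\D_-$ as provided by (\ref{Hyp:LLnon}). Since $u$ is global, $u\in C(Q_{\t^\ast})$ and one may apply Theorem~\ref{Thm:Lower} with $T=\t^\ast$ itself: for any $\V\in(0,1)$, $\t_0\in(0,\t^\ast)$ and $t\in[\t_0,\t^\ast)$,
\[
\mu(\cdot,t)\gtrsim\exp\bigl((1-\V)c_{\t^\ast}\L_\ast(t)|x|+\D|x|^{\A_+}\bigr).
\]
By continuity of $\L_\ast$ and the strictness of (\ref{Hyp:LLnon}), pick $t=t_1\in[\t_0,\t^\ast)$ close enough to $\t^\ast$ and $\V$ small enough to ensure $(1-\V)c_{\t^\ast}\L_\ast(t_1)>1-\D_-$. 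If $\A<1$ the linear-in-$|x|$ term dominates and $u(\cdot,t_1)\gtrsim e^{\kappa|x|}$ with $\kappa>1$; if $\A=1$, the coefficient of $|x|$ in the exponent equals $(1-\V)c_{\t^\ast}\L_\ast(t_1)+\D>(1-\D_-)+\D\geq1$, by the elementary check that $(1-\D_-)+\D=1+\D$ when $\D\geq0$ and $(1-\D_-)+\D=1$ when $\D<0$ (with $\D_-=\min(\D,0)$). Either way, $u(\cdot,t_1)\gtrsim e^{\kappa|x|}$ with $\kappa>1$, so Proposition~\ref{Prop:fastgrow} applied to the Cauchy problem shifted by $t_1$ forces complete blow-up, contradicting global existence. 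The main obstacle is precisely this bookkeeping of the strict inequality, which has to survive the $(1-\V)$ loss in Theorem~\ref{Thm:Lower} and the passage from $\L_\ast(\t^\ast)$ to $\L_\ast(t_1)<\L_\ast(\t^\ast)$; choosing $T=\t^\ast$ (permitted because $u$ is global) avoids a possible discontinuity of $T\mapsto c_T$ at $\t^\ast$, after which the residual losses are absorbed by making $\V$ and $\t^\ast-t_1$ small.
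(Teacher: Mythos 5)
Your proof is correct and follows essentially the same underlying mechanism as the paper's, differing mainly in organization. The paper's own proof of both parts first uses Theorem~\ref{Thm:Lower} exactly as you do, and then \emph{inlines} the divergence of the Bessel convolution: after obtaining $\mu(x,t)\geq C'e^{\B|x|^\S}$ (respectively $\mu\gtrsim e^{|x|}$) uniformly for $t$ in some $[\t_0,\t_1]$, it plugs this directly into $\mu\geq\int_{\t_0}^t(\CB\mu)(\t)\,d\t$ and computes that the resulting kernel integral $K(x)=\int\U b(y)e^{\B|x-y|^\S-|y|}\,dy$ diverges. You instead feed the same lower bound into Proposition~\ref{Prop:fastgrow} applied to the time-shifted Cauchy problem. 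Since the proof of Proposition~\ref{Prop:fastgrow} is precisely that Bessel-kernel divergence estimate, the two routes are mathematically identical; yours is the more modular presentation, while the paper's avoids the minor extra bookkeeping of restating the problem with shifted time and shifted potential. Your treatment of the edge cases is sound: absorbing $\D|x|^{\A_+}$ into the dominant $|x|^\S$ term for $|x|$ large when $\S>\A_+$ in part (i), and the elementary identity $(1-\D_-)+\D\geq1$ with $\D_-=\min(\D,0)$ in part (ii), match what the paper needs. One small presentational remark: in part (i) you should note explicitly that it suffices to treat $T<\I$ (a global solution restricts to $Q_{T'}$ for every finite $T'$), since Theorem~\ref{Thm:Lower} is stated for finite $T$; and in part (ii) you only need $\kappa\geq1$ rather than strict $\kappa>1$ to invoke Proposition~\ref{Prop:fastgrow} with $\A=1$, though your stronger conclusion is harmless.
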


\begin{proof}
(i) 
Assume $u\gneq0$ is a solution of (\ref{Eqn:Lin}) on some $Q_T$. Fix any $\t_0>0$. By Theorem \ref{Thm:Lower} then
\[
\mu\geq C\exp\left((1-\V)c_{T}\L_\ast(t)|x|^\S+\D|x|^{\A_+}\right)\quad(x\in\R^n,\t_0\leq t<T).
\]
We note that $\S>1\geq\A_+$. Define
\[
L(t):=\left(\frac{2|\D|}{(1-\V)c_{T}\L_\ast(t)}\right)^{1/(\S-\A_+)}\quad(t\geq\t_0).
\]
If $|x|>L(t)$, then $(1-\V)c_{T}\L_\ast(t)|x|^\S+\D|x|^{\A_+}\geq(1/2)(1-\V)c_{T}\L_\ast(t)|x|^\S$, hence
\[
\mu\geq C\exp\left(\frac{1}{2}(1-\V)c_{T}\L_\ast(t)|x|^\S\right)\quad\mbox{on $\{|x|>L(t),t\geq\t_0\}$}.
\]
Observe that $t\mapsto L(t)$ is a decreasing function and $L(t)\to\I$ as $\V\to1$.\smallskip

Fix $\t_1\in(\t_0,T)$ and define
\begin{align}
l:=\inf_{|x|\leq L(\t_0),t\in[\t_0,\t_1]}\mu(x,t)>0.
\end{align}
For each $t\in[\t_0,\t_1]$, we have
\begin{align*}
\displaystyle\mu(x,t)\geq
\begin{cases}
l&\mbox{if $|x|\leq L(\t_0)$},\\
\vspace{-10pt}\\
\displaystyle C\exp\left(\frac{1}{2}(1-\V)c_{T}\L_\ast(t)|x|^\S\right)&\mbox{if $|x|>L(\t_0)$}.
\end{cases}
\end{align*}
This implies
\begin{align}
\mu(x,t)\geq C'e^{\B|x|^\S}\quad(x\in\R^n,t\in[\t_0,\t_1]),
\end{align}
where 
\begin{align}
\B:=\frac{1}{2}(1-\V)c_{T}\L_\ast(\t_0),\quad
C':=e^{-\B L(\t_0)^\S}\min\left\{l,Ce^{\B L(\t_0)^\S}\right\}.
\end{align}

Now for any $x\in\R^n$ and $t\in[\t_0,\t_1]$, we have
\begin{align*}
\mu(x,t)&=u_0+\int_0^t(\dot\CB V\mu)(x,\t)d\t\geq\int_{0}^t(\CB\mu)(x,\t)d\t,\\
&\geq\int_{\t_0}^t\int_{\R^n}\U b(y)e^{-|y|}C'e^{\B|x-y|^\S}dy,\\
&=(t-\t_0)C'\int_{\R^n}\U b(y)e^{\B|x-y|^\S-|y|}dy=:(t-\t_0)C'K(x).
\end{align*}
We consider the integral $K(x)$. If $|y|\geq1$ then $\U b(y)\geq C|y|^{(1-n)/2}$. Also if in addition $|y|\gg|x|$, says
\[
|y|\geq r_0:=\max\left\{\frac{1}{1-\nu}|x|,\left(\frac{1+\V_0}{\B\nu^\S}\right)^{\frac{1}{\S}}\right\},\quad(0<\nu<1,\V_0>0),
\]
then we have 
\begin{align*}
\B|x-y|^\S-|y|&\geq\B(|y|-|x|)^\S-|y|\geq\B\nu^\S|y|^\S-|y|,\\
&\geq|y|\B\nu^\S\cdot\frac{1+\V_0}{\B\nu^\S}-|y|\geq\V_0|y|.
\end{align*}
Hence
\begin{align*}
K&\geq\T C\int_{|y|\geq1\wedge r_0}|y|^{\frac{1-n}{2}}e^{\V_0|y|}dy=\I.
\end{align*}
So $u$ instantaneously blows up on $\R^n\times[\t_0,T)$ for any $\t_0>0$, implying the assertion (i).\smallskip

(ii) Assume that $u\geq0$ is a nontrivial global solution for Eq.\ (\ref{Eqn:Lin}). By the assumption (\ref{Hyp:LLnon}), we can choose $\t_0,T$ sufficiently large with $\t_0<T$ and $\V>0$ close to 0 so that
\begin{align*}
(1-\V)c_{T}\L_\ast(t)>\begin{cases}
1&\mbox{if $\A<1$},\\
1-\D_-&\mbox{if $\A=1$}.
\end{cases}\quad\mbox{for all $t\in[\t_0,T]$}.
\end{align*}
By Theorem \ref{Thm:Lower} we then have
\begin{align*}
\mu\gtrsim e^{|x|}
\end{align*}
for all $|x|$ sufficiently large, uniformyly for $t\in[\t_0,T]$. Using that $\mu\geq\int_0^t\CB\mu(\t)\,d\t$
it then follows that
\begin{align*}
\mu(x,t)&\geq C(t-\t_0)\int_{\R^n}\U b(x-y)e^{-|x-y|}e^{|y|}dy,\\
&\geq C(t-\t_0)\int_{|x-y|\geq1,|y|\gg1}|x-y|^{\frac{1-n}{2}}e^{-|x|}dy=\I,
\end{align*}
for all $x\in\R^n,\t_0<t\leq T$.

\end{proof}

\begin{corollary}
Let $\S\geq1$. Then the equation
\begin{align}
\P_tu-\Lap\P_tu=\Lap u+t^\nu a(x)u\quad(x\in\R^n,t>0),
\end{align}
where $\nu\geq0$ $a(x)\gtrsim|x|^\S$ as $|x|\to\I$, has no solutions $u\gneq0$ in the case $\S>1$ and it has no nontrivial global solutions $u\gneq0$ when $\S=1$. In particular, the problem
\begin{align}
\P_tu-\Lap\P_tu=\Lap u+t^\nu|x|^\S u\quad(x\in\R^n,t>0)
\end{align}
has no solutions $u\gneq0$ if $\S>1$ and it has no global solutions $u\gneq0$ if $\S=1$.

\end{corollary}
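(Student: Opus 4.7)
The plan is to reduce the corollary to parts (i) and (ii) of the preceding theorem by a time-shift argument. The coefficient $a(x,t)=t^\nu a(x)$ satisfies (\ref{Hyp:a}) with $\L(t)=ct^\nu$ (where $c>0$ is the constant implicit in $a(x)\gtrsim|x|^\S$), but the corollary imposes no hypothesis whatsoever on $u_0$, so the theorem cannot be applied directly to $u$.

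First I would fix any nontrivial nonnegative solution $u$ on some $Q_T$, pick a small $\t_0\in(0,T)$, and use Lemma \ref{Lem:Non2} to obtain $u(\t_0)\gtrsim e^{-\D|x|}$ as $|x|\to\I$ for any chosen $\D>1$; equivalently $\liminf_{|x|\to\I}\log u(\t_0)/|x|\geq-\D$, so $v_0:=u(\t_0)$ satisfies (\ref{Hyp:u0alpha}) with $\A=1$ and $\D'=-\D\in(-\I,-1)\subset(-\I,1)$, and by the remark following Lemma \ref{Lem:Non2} also (\ref{Hyp:u0pos}). Setting $v(x,t):=u(x,t+\t_0)$, then $v$ solves an equation of the form (\ref{Eqn:Lin}) on $Q_{T-\t_0}$ with initial datum $v_0$ and time-shifted potential $\T a(x,t):=(t+\t_0)^\nu a(x)$; this still satisfies (\ref{Hyp:a}) with $\T\L(t):=c(t+\t_0)^\nu$, which is strictly positive (even at $t=0$, thanks to $\t_0>0$) and locally bounded on $[0,\I)$.

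For $\S>1$, part (i) of the preceding theorem, applied to $v$, forces $v$ to blow up completely on $Q_{T-\t_0}$, contradicting the continuity of $v$, so no such $u$ can exist. For $\S=1$, I need to verify the non-existence hypothesis (\ref{Hyp:LLnon}). Since $\D'<-1$ we have $(\D')_->1$, so $1-(\D')_-<0$, while $\T c_T\T\L_\ast(T)>0$ for any $T>0$ (in fact it grows like $T$ for large $T$); hence (\ref{Hyp:LLnon}) is trivially satisfied and part (ii) of the theorem rules out any nontrivial global $v$, contradicting the global existence of $u$. The ``in particular'' statement is the special case $a(x)=|x|^\S$, which manifestly satisfies $a(x)\gtrsim|x|^\S$.

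The only conceptual point is the absence of a growth/decay assumption on $u_0$; this is resolved by Lemma \ref{Lem:Non2}, which simultaneously provides the needed exp-linear bound on $u(\t_0)$ (validating (\ref{Hyp:u0alpha}) after the time shift with $\A=1$ and $\D'<-1$) and drives the threshold $1-(\D')_-$ in (\ref{Hyp:LLnon}) below zero, so the quantitative condition on $c_\t\L_\ast(\t)$ becomes automatic. No further calculation is required beyond routine verification that $\T\L$, $\T\L_\ast$, and $\T c_T$ inherit the needed positivity and local boundedness from the explicit form $\T\L(t)=c(t+\t_0)^\nu$.
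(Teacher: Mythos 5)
Your proposal is correct. The paper states this corollary without proof, and your derivation --- using Lemma \ref{Lem:Non2} to equip $v_0 := u(\t_0)$ with the exp-linear lower bound $e^{-\D|x|}$ ($\D>1$), time-shifting to $v(t)=u(t+\t_0)$ so that $\T\L(t)=c(t+\t_0)^\nu$ is strictly positive and locally bounded, and then invoking parts (i) and (ii) of the preceding theorem --- is exactly the reduction the paper intends. You also correctly identify and resolve the only subtlety, namely that the corollary imposes no hypothesis on $u_0$, and correctly observe that with $\D'<-1$ the threshold $1-\D'_-$ in (\ref{Hyp:LLnon}) is negative so that condition is automatic (one could also note that $\sup_\t c_\t\L_\ast(\t)=\infty$ for $\L(t)=ct^\nu$, making (\ref{Hyp:LLnon}) hold for any admissible $\D$).
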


\begin{corollary}

Let $\L$ be a constant. Assume that
\begin{align}
\L>1\quad\mbox{and}\quad\liminf_{|x|\to\I}\log u_0\geq0.
\end{align}
Then the Cauchy problem
\begin{align}
\P_tu-\Lap\P_tu=\Lap u+\L e^{-t}|x|u,\quad u(0)=u_0
\end{align}
has no global solution $u\gneq0$.

\end{corollary}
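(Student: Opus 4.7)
The plan is to realize this corollary as a direct application of part (ii) of the preceding theorem, with the only real work being the verification of its three hypotheses and the computation of the sharp constant.

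First I would identify the parameters. Here the potential $a(x,t)=\L e^{-t}|x|$ fits (\ref{Hyp:a}) with $\S=1$, $R_0=0$, and the positive locally bounded time factor $\L(t):=\L e^{-t}$. Since $\liminf_{|x|\to\I}\log u_0\geq0$, the hypothesis (\ref{Hyp:u0alpha}) holds with $\A=0<1$ and, say, any $\D\in(-\I,0)$ (we use the slow-decay/growth branch). The positivity assumption (\ref{Hyp:u0pos}) may be assumed without loss of generality via Lemma \ref{Lem:Non2} and the remark following it, because any nontrivial nonnegative solution becomes strictly positive instantly.

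Next I would verify the sharpness condition (\ref{Hyp:LLnon}) of part (ii). Since $\S=1$ and $\A<1$, the required inequality reduces to
\[
\sup_{\t>0}c_\t\L_\ast(\t)>1.
\]
Because $R_0=0$ with $\S>0$, we interpret $R_0^{-\S}=+\I$, so $c_\t=\min\{1,+\I/\|\L\|_{L^\I(0,\t)}\}=1$ for every $\t>0$. A direct computation gives
\[
\L_\ast(\t)=\int_0^\t\L e^{-s}\,ds=\L(1-e^{-\t}),
\]
so $\sup_{\t>0}c_\t\L_\ast(\t)=\L$. Our hypothesis $\L>1$ is exactly what is needed.

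With all the hypotheses verified, part (ii) of the theorem applies and rules out any nontrivial global nonnegative solution. The only subtle point, which I would address explicitly, is the treatment of the boundary case $R_0=0$ in the constant $c_\t$; once one adopts the natural convention $0^{-\S}=+\I$ for $\S>0$ (consistent with the statement of (\ref{Hyp:a}) that allows $R_0\geq0$ when $\S\geq0$), no other obstacle arises and the corollary follows immediately.
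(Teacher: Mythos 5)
Your proof is correct and is essentially the natural derivation that the paper intends (the corollary is stated without an explicit proof, to be read as a direct specialization of part (ii) of the preceding theorem). You correctly identify $\S=1$, $\L(t)=\L e^{-t}$, $R_0=0$ (permitted since $\S\geq0$), compute $\L_\ast(\t)=\L(1-e^{-\t})$ so that $\sup_\t c_\t\L_\ast(\t)=\L$, and observe that taking $\A=0$ and any $\D<0$ satisfies (\ref{Hyp:u0alpha}) under the given liminf assumption, while putting you in the branch of (\ref{Hyp:LLnon}) whose threshold is exactly $1$. Your explicit treatment of the degenerate constant $c_\t=1$ when $R_0=0$ matches what the proof of Theorem~\ref{Thm:Lower} actually uses: the constant arises only to control the region $|x|<R_0$, which is empty here.
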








\section{Bessel convolution multiplication operator}\label{Sec:Besconv}

In this section we investigate the Bessel convolution multiplication operator and its iterations (\ref{Bconv}) which arise in the study of (\ref{Eqn:Lin}) and (\ref{Eqn:LConv}). Assume $V:\R^n\times[0,T)\to\R$ ($0<T\leq\I$) is a real-valued function that has at most a power spacial growth at infinity; precisely, there is $\S\geq0$ such that
\begin{align}\label{Hyp:Vtime}
|V(x,t)|\leq\LL(t)|x|^\S+1\quad\mbox{where $\LL\in B_{loc}([0,T))$}.
\end{align}
Our main goal is to study the one-parameter semigroup
\begin{align}
e^{t\dot\CB V}=\sum_{k=0}^\I\frac{t^k}{k!}\dot\CB V^k,
\end{align}
which arises from the evolution equation (\ref{Eqn:Lin}).\smallskip

All the results can be generalized to more general operators, especially, in the study of (\ref{Eqn:LConv}). In fact, we can consider any convolution operator $\CH$:
\begin{align}
\CH\vp=\int_{\R^n}H(x-y)\vp(y)\,dy,
\end{align}
such that the kernel function $H$ satisfies
\begin{align}\label{K}
H(x)=H(|x|)\geq0,\quad H\in L^1_{loc}(\R^n),\quad H(x)\lesssim e^{-|x|}\quad\mbox{as $|x|\to\I$}.
\end{align}

\begin{lemma}\label{Lem:Ex1}

Let $D\in[0,\I)$, $\gamma\in(0,\I)$, and $\V\in(0,1)$. Then
\begin{align}
|y|^De^{-\gamma|x-y|}\leq\left(|x|+\frac{D}{(1-\V)\gamma}\right)^De^{-\V\gamma|x-y|}\quad(\forall\,x,y\in\R^n).
\end{align}

\end{lemma}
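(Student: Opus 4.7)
The plan is to rearrange the claim, reduce it by the triangle inequality, and then apply the elementary convexity bound $e^z \geq 1+z$. Setting $c := (1-\varepsilon)\gamma > 0$, the inequality is equivalent (after multiplying both sides by $e^{\gamma|x-y|}$) to
\[
|y|^D \leq \left(|x| + \frac{D}{c}\right)^D e^{c|x-y|}.
\]
The case $D=0$ is trivial since the right side is $\geq 1$, so I will assume $D>0$ and take $D$-th roots to reduce to
\[
|y| \leq \left(|x| + \frac{D}{c}\right) e^{c|x-y|/D}.
\]

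Next I will invoke the triangle inequality $|y| \leq |x| + |x-y|$. Writing $a := |x|$ and $r := |x-y|$, it therefore suffices to establish the purely one-variable bound
\[
a + r \leq \left(a + \frac{D}{c}\right) e^{cr/D} \qquad (a, r \geq 0).
\]

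The only real step is then the elementary exponential bound $e^z \geq 1+z$ applied at $z = cr/D \geq 0$, which yields
\[
\left(a + \frac{D}{c}\right) e^{cr/D} \;\geq\; \left(a + \frac{D}{c}\right)\left(1 + \frac{cr}{D}\right) \;=\; a + r + \frac{D}{c} + \frac{ac\,r}{D} \;\geq\; a + r,
\]
the last inequality using $a,r\geq 0$ and $c,D>0$. This closes the argument.

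I do not anticipate any genuine obstacle here; the content is essentially bookkeeping of constants, and the specific coefficient $D/((1-\varepsilon)\gamma)$ in the statement is exactly what makes the one-line inequality $e^z \geq 1+z$ collapse onto $a + r \leq (a + D/c)e^{cr/D}$. The role of the lemma in the rest of the paper is clearly to absorb a polynomial weight $|y|^D$ into a shifted base $|x|+D/c$ at the cost of relaxing the exponential decay rate from $\gamma$ to $\varepsilon\gamma$, which is exactly the trade-off the above computation realizes.
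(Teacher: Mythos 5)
Your proof is correct and takes essentially the same route as the paper: reduce by taking $D$-th roots to the case $D=1$, invoke the triangle inequality $|y|\leq|x|+|x-y|$, and finish with an elementary exponential bound. The paper additionally normalizes the constants by a spatial scaling (``homogenization'') and closes with $|x-y|e^{-|x-y|}\leq 1$ rather than $e^z\geq 1+z$, but these are equivalent elementary facts; the content is the same.
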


\begin{proof}
By homogenization, it suffices to show that
\[
|y|e^{-|x-y|}\leq|x|+1\quad\forall\,x,y\in\R^n.
\]
But this is true because $(|y|-|x|)e^{-|x-y|}\leq|x-y|e^{-|x-y|}\leq1\leq|x|(1-e^{-|x-y|})+1$. \QED
\end{proof}

Due to the fact that the kernel $B\in L^1_{loc}(\R^n)$, $B\lesssim e^{-|x|}$ at infinity, and $V$ has at most a power growth, we have the following estimates for $\dot{\CB}V$.

\begin{lemma}\label{Lem:Ex2}
There is $c_0=c_0(n)>0$ such that, for any $\LL_0>0$, $D\geq0$, and $\V\in(0,1)$, we have
\begin{align}
\left|\CB\left((\LL_0|\cdot|^D+1)g\right)\right|\leq c_0\left[\LL_0\left(|x|+\frac{D}{1-\V}\right)^D+1\right]\varPhi_\V|g|,
\end{align}
where $g:\R^n\to\R$ and
\begin{align}
\varPhi_\V|g|=e^{-\V|\cdot|}\ast|g|.
\end{align}
\end{lemma}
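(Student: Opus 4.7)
The plan is to dominate $\CB$ by its integral representation, split the bound $|V(y)|\leq\LL_0|y|^D+1$ into its two summands, and reduce each piece to the exponential convolution $\varPhi_\V|g|$ via the kernel estimate from Lemma \ref{Lem:B2side} and the exponential trade-off from Lemma \ref{Lem:Ex1}.

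Since $B\geq 0$, bringing the absolute value inside the integral gives
\[
\left|\CB\left((\LL_0|\cdot|^D+1)g\right)(x)\right| \leq \LL_0\int_{\R^n} B(x-y)|y|^D|g(y)|\,dy + \int_{\R^n} B(x-y)|g(y)|\,dy.
\]
For the polynomial-weighted piece, I would apply Lemma \ref{Lem:Ex1} with $\gamma=1$ in the form
\[
|y|^D \leq \left(|x|+\tfrac{D}{1-\V}\right)^D e^{(1-\V)|x-y|},
\]
which factors the spatial weight outside the integral:
\[
\int B(x-y)|y|^D|g(y)|\,dy \leq \left(|x|+\tfrac{D}{1-\V}\right)^D \int B(x-y)\,e^{(1-\V)|x-y|}|g(y)|\,dy.
\]

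Combining with the upper estimate $B(r)\lesssim \O b(r) e^{-r}$ from Lemma \ref{Lem:B2side}, the remaining integrand is bounded by a constant times $\O b(x-y)\,e^{-\V|x-y|}|g(y)|$. I then absorb the factor $\O b(x-y)$ into a constant depending only on $n$ to recover an estimate in terms of $\varPhi_\V|g|(x)$. The constant summand is handled identically (with $D=0$), giving the second term of the bracket $[\LL_0(\cdot)^D+1]$.

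The main obstacle is the local behavior of $B$ (and of its envelope $\O b$) at the origin. In dimension $n\geq 2$ the kernel $\O b$ is singular --- logarithmic for $n=2$, polynomial of order $|x|^{2-n}$ for $n\geq 3$ --- so the pointwise comparison $B(r)e^{(1-\V)r}\lesssim e^{-\V r}$ fails near $r=0$. The singular part of $\O b$ is nonetheless locally $L^1$, and I would absorb it into the constant $c_0(n)$ by splitting the integral into $\{|x-y|\leq 1\}$ and $\{|x-y|\geq 1\}$: on the outer region, $\O b(r)e^{-(1-\V)r}$ is bounded and the desired inequality is pointwise; on the inner region, the lower bound $e^{-\V|x-y|}\geq e^{-\V}$ lets the local contribution $\int_{|x-y|\leq 1}\O b(x-y)|g(y)|\,dy$ be absorbed into $c_0\varPhi_\V|g|(x)$ via the $L^1$-norm of $\O b$ on the unit ball.
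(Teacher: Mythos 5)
Your proof follows the same structure as the paper's: split $\LL_0|\cdot|^D+1$ into its two summands, apply Lemma~\ref{Lem:Ex1} with $\gamma=1$ to extract the spatial weight, invoke the upper estimate $B(z)\lesssim\O b(z)e^{-|z|}$ from Lemma~\ref{Lem:B2side}, and split the integral into $\{|x-y|<1\}$ and $\{|x-y|\geq1\}$. On the outer region both arguments use $\O b\leq1$ so the factor disappears, and that part is fine. The only cosmetic difference is the order of operations --- you apply Lemma~\ref{Lem:Ex1} before introducing the $B$-bound, the paper does the reverse --- which is immaterial.

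Where the two proofs both become imprecise is the inner region. The paper asserts, attributing it to Young's inequality, that $\int_{|x-y|<1}\O b(x-y)g(y)\,dy\lesssim\int_{|x-y|<1}g(y)\,dy$ as a \emph{pointwise-in-$x$} estimate; your version analogously claims the inner contribution is absorbed \emph{via the $L^1$-norm of $\O b$ on the unit ball}. Neither argument produces a pointwise bound once $n\geq2$, since $\O b$ is unbounded at the origin ($1-\ln|z|$ for $n=2$, $|z|^{2-n}$ for $n\geq3$): testing with $g=1_{B_\epsilon(x)}$ and sending $\epsilon\to0$ makes the ratio of the two sides of that claimed inequality diverge (like $\epsilon^{2-n}$ for $n\geq3$, like $|\ln\epsilon|$ for $n=2$). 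The $L^1$-norm of $\O b$ on $B_1$ controls $\int_{B_1}\O b(\cdot-y)|g(y)|\,dy$ only against $\|g\|_{L^\I(B_1(x))}$, not against $\varPhi_\V|g|(x)$ or against $\int_{B_1(x)}|g|$, and no such local boundedness of $g$ is assumed. Young's inequality gives norm estimates, not pointwise ones, so it does not rescue the step either. In short, you reproduce the paper's argument essentially faithfully, including its gap in the near-origin absorption; a corrected version would have to either impose local boundedness on $g$ or retain the singular inner term $\int_{B_1}\O b(\cdot-y)|g(y)|\,dy$ explicitly in the conclusion and track it through the iterations in Theorem~\ref{Thm:Exest}.
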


\begin{proof}
We can assume $g\geq0$. Using the upper estimate (\ref{Est:B2side}) for the kernel $B$ then we have
\begin{align*}
\CB\left(|\cdot|^Dg\right)&=\int_{\R^n}B(x-y)|y|^Dg(y)dy,\\
&\lesssim\int_{\R^n}\O b(x-y)e^{-|x-y|}|y|^Dg(y)dy,\\
&\lesssim\int_{|x-y|<1}\O b(x-y)e^{-|x-y|}|y|^Dg(y)dy+\int_{|x-y|\geq1}e^{-|x-y|}|y|^Dg(y)dy.
\end{align*}
By Lemma \ref{Lem:Ex1}, with $\gamma=1$, we have
\[
|y|^De^{-|x-y|}\leq\left(|x|+\frac{D}{1-\V}\right)^De^{-\V|x-y|}.
\]
Using Young's inequality, it follows that $\int_{|x-y|<1}\O b(x-y)g(y)dy\lesssim\int_{|x-y|<1}g(y)dy$, hence
\begin{align*}
\CB\left(|\cdot|^Dg\right)&\lesssim\left(|x|+\frac{D}{1-\V}\right)^\D\int_{\R^n}e^{-\V|x-y|}g(y)dy.
\end{align*}
Summing this estimate together with the case $D=0$, the desired estimate then follows.\QED

\end{proof}

\begin{remark}

The estimate in this lemma is true for any operator $\CH$ satisfying (\ref{K}).

\end{remark}

Let us study a typical case that $V=V_0$ where
\begin{align}\label{Hyp:V0}
V_0=\LL_0|x|^\S+1,
\end{align}
for some constants $\LL_0>0$ and $\S\geq0$. For convenience, in the proof of the following result we will denote the power function
\begin{align}
\CP_{\gamma,V_0}:=\LL_0\left(|x|+\frac{\S}{\gamma}\right)^\S+1.
\end{align}

\begin{theorem}\label{Thm:Exest}

Let $0<\V_1<\ldots<\V_N<1$. Then there is a constant $h=h(n,\LL_0,\S,\V_1)>0$ such that
\begin{align}
\left|\dot\CB V_0^N g\right|\leq h^N\left[\LL_0\left(|x|+\frac{\S}{\gamma}\right)^\S+1\right]^N\varPhi_{\r}|g|,
\end{align}
where $\gamma:=(1-\V_N)\V_1\cdots\V_{N-1}$ and $\r:=\V_1\cdots\V_N$.
\end{theorem}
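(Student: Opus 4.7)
The plan is to induct on $N$. The base case $N=1$ is immediate from Lemma \ref{Lem:Ex2} with $D=\S$ and $\V=\V_1$; the empty-product convention makes $\gamma=1-\V_1$ and $\r=\V_1$, and we may take $h\geq c_0(n)$.

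For the inductive step, assume the conclusion for $N-1$ with $\gamma'=(1-\V_{N-1})\V_1\cdots\V_{N-2}$ and $\r'=\V_1\cdots\V_{N-1}$. Writing $\dot\CB V_0^N g=\dot\CB V_0(\dot\CB V_0^{N-1}g)$ and using nonnegativity of $B$ and $V_0$,
\[
|\dot\CB V_0^N g|(x)\leq h^{N-1}\int B(x-y)V_0(y)\CQ'(y)^{N-1}\varPhi_{\r'}|g|(y)\,dy,
\]
where $\CQ'(y):=\LL_0(|y|+\S/\gamma')^\S+1$. A short computation using $\V_{N-1}<\V_N<1$ (and $\V_N(2-\V_N)<1$) gives $\gamma<\gamma'$, so $\CQ'\leq\CQ$ for $\CQ(y):=\LL_0(|y|+\S/\gamma)^\S+1$; also $V_0\leq\CQ$, hence $V_0(y)\CQ'(y)^{N-1}\leq\CQ(y)^N$, and the bound reduces to $\int B(x-y)\CQ(y)^N\varPhi_{\r'}|g|(y)\,dy$.

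I would dispatch this integral in three steps. First, bound $B(x-y)\lesssim\O{b}(x-y)e^{-|x-y|}$ via Lemma \ref{Lem:B2side}. Second, move $\CQ(y)^N$ to $\CQ(x)^N$ using the shifted-norm variant of Lemma \ref{Lem:Ex1},
\[
(|y|+C)^D e^{-\lambda|x-y|}\leq\bigl(|x|+C+D/((1-\V)\lambda)\bigr)^D e^{-\V\lambda|x-y|},
\]
whose proof reduces via $|y|+C\leq(|x|+C)+|x-y|$ to the elementary $M+u\leq(M+1/\mu)e^{\mu u}$; apportioning $e^{-|x-y|}=\prod_{k=1}^N e^{-|x-y|/N}$ and applying the inequality to each of the $N$ brackets converts $\CQ(y)^N e^{-|x-y|}$ into $\CQ(x)^N e^{-\mu|x-y|}$ times a constant, up to a correction in the shift. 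Third, combine the surviving exponentials with $\varPhi_{\r'}|g|(y)=\int e^{-\r'|y-z|}|g(z)|\,dz$ by the triangle inequality
\[
e^{-\mu|x-y|}e^{-\r'|y-z|}\leq e^{-\r|x-z|}e^{-(\mu-\r)|x-y|}e^{-(\r'-\r)|y-z|},
\]
with $\r=\V_1\cdots\V_N$ and $\mu$ chosen in $(\r,1)$, then integrate the surplus $\O{b}(x-y)e^{-(\mu-\r)|x-y|}$ over $y$ against the finite constant $\int\O{b}(w)e^{-(\mu-\r)|w|}\,dw$. Re-integrating in $z$ rebuilds $\CQ(x)^N\varPhi_\r|g|(x)$.

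The main obstacle is the bookkeeping in step (ii): the apportionment produces a correction to the shift of order $\S N$, which must be absorbed into the target shift $\S/\gamma$. This works because $1/\gamma=1/((1-\V_N)\V_1\cdots\V_{N-1})$ grows at least multiplicatively in $N$ under the ordering $\V_k<\V_{k+1}<1$, dominating the linear-in-$N$ correction. The convolution constants collected at each stage are of the form $\int\O{b}(w)e^{-\eta|w|}\,dw$ with $\eta$ bounded below in terms of $\V_1$ (the smallest rate, since $\r\leq\V_1$ for all $N$); consequently they combine into a single factor $h^N$ with $h=h(n,\LL_0,\S,\V_1)$, closing the induction.
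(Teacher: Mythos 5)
Your base case, the appeal to Lemma \ref{Lem:Ex1} to relocate power factors, and the triangle-inequality dispatch in step (iii) are all the right ingredients, and your observation that $\gamma<\gamma'$ is correct. However, the inductive step has a genuine gap, and it comes from the decomposition you chose.

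You write $\dot\CB V_0^Ng=\dot\CB V_0\bigl(\dot\CB V_0^{N-1}g\bigr)$, which forces all $N$ power factors --- the $V_0(y)$ from the new outer application and the $\CQ'(y)^{N-1}$ from the inductive hypothesis --- to sit inside the $y$-integral, so all $N$ must be relocated from $|y|$ to $|x|$ by Lemma \ref{Lem:Ex1}. Each relocation strictly increases the shift: at allocated rate $\alpha$ the shift jumps by $\S/((1-\V)\alpha)$. Since you first bound $V_0(y)\CQ'(y)^{N-1}\leq\CQ(y)^N$ (already at the target shift $\S/\gamma$), the relocated function has a shift strictly larger than $\S/\gamma$, so you never land on $\CQ(x)^N$. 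Absorbing the overshoot into the constant via Lemma \ref{Lem:tech} (i) costs a factor growing with $N$ --- exactly the ``correction of order $\S N$'' you flag --- which cannot be hidden in $h^N$ with $h$ fixed. Nor does keeping the smaller shift $\S/\gamma'$ and allocating rates optimally among the $N$ factors save it: closing the step then requires
\[
\gamma+\frac{N-1}{1/\gamma-1/\gamma'}\leq 1-\r,
\]
and this fails for admissible sequences. For instance $\V_k=(M+k-1)/(M+k)$ with $M=3$, $N=2$ gives $\V_1=3/4$, $\V_2=4/5$, hence the left side is $3/20+3/8=21/40$ while $1-\r=2/5=16/40$. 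Your claim that ``$1/\gamma$ grows at least multiplicatively in $N$'' is not a statement about a fixed finite sequence $\V_1<\cdots<\V_N$, and is simply false in such cases.

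The paper sidesteps the whole issue by inducting on the \emph{other} decomposition from (\ref{Bconv}), $\dot\CB V_0^Ng=\dot\CB V_0^{N-1}\bigl(\dot\CB V_0 g\bigr)$. There, the $(N-1)$ power factors produced by the outer $\dot\CB V_0^{N-1}$ are already functions of $x$, \emph{outside} the convolution; only the single factor $\CP_{1-\V_1}(y)$ coming from the innermost $\dot\CB V_0 g$ must be moved. Moreover the paper first collapses $\CP_{1-\V_1}(y)\leq L\,(\LL_0|y|^\S+1)$ via Lemma \ref{Lem:tech} (i), with $L$ depending only on $\V_1$, so the factor being moved carries zero shift; a single application of Lemma \ref{Lem:Ex1} with rate $\V_1\cdots\V_{k-1}$ and parameter $\V_k$ then yields shift exactly $\S/\gamma_k$, never overshooting. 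That one-factor-per-layer bookkeeping is what keeps the growth confined to $h^N$. To salvage your argument you would need to rebuild the induction on that decomposition.
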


\begin{proof}

Since $\gamma$ is fixed, let us write $\CP_{\gamma,V_0}=\CP_{\gamma}$. By Lemma \ref{Lem:Ex2}, we immediately obtain
\begin{align}\label{Est:BV1}
|\dot\CB V_0g|\leq c_0\CP_{1-\V_1}\varPhi_{\V_1}|g|.
\end{align}
Using this estimate then we get
\begin{align*}
\left|\dot\CB V_0^2g\right|&=\left|\dot\CB V_0(\dot\CB V_0g))\right|,\\
&\leq c_0^2\CP_{1-\V_1}\varPhi_{\V_1}(\CP_{1-\V_1}\varPhi_{\V_1}|g|).
\end{align*}
By Lemma \ref{Lem:tech} (i) and Lemma \ref{Lem:Ex1}, we have
\begin{align}\label{Est:P1V1}
\CP_{1-\V_1}&
\leq\LL_0|x|^\S+\LL_0\left(\frac{\S}{1-\V_1}\right)^\S+1\leq L\left(\LL_0|x|^\S+1\right),\quad L:=\LL_0\left(\frac{\S}{1-\V_1}\right)^\S+1.
\end{align}
Thus
\begin{align*}
\left|\dot\CB V_0^2g\right|&\leq c_0^2L\CP_{1-\V_1}\varPhi_{\V_1}\left((\LL_0|\cdot|^\S+1)\varPhi_{\V_1}|g|\right),\\
&=c_0^2L\CP_{1-\V_1}\int_{\R^n\times\R^n}(\LL_0|y|^\S+1)e^{-\V_1|x-y|}e^{-\V_1|y-z|}|g(z)|dzdy,\\
&\leq c_0^2L\CP_{1-\V_1}\int_{\R^n\times\R^n}\left[\LL_0\left(|x|+\frac{\S}{(1-\V_2)\V_1}\right)^\S+1\right]e^{-\V_1\V_2|x-y|}e^{-\V_1|y-z|}|g(z)|dzdy,\\
&\leq c_0^2L\CP_{1-\V_1}\CP_{(1-\V_2)\V_1}\varPhi_{\V_1\V_2}|g|\int_{\R^n}e^{-\V_1(1-\V_2)|y-z|}dy,\\
&\leq h^2\CP_{1-\V_1}\CP_{(1-\V_2)\V_1}\varPhi_{\V_1\V_2}|g|,
\end{align*}
where 
\[
h:=c_0L\max\left\{1,\int_{\R^n}e^{-\V_1(1-\V_1)|y|}dy\right\}
\]
and we have used Lemma \ref{Lem:Ex1} with $\V=\V_2$ and $\gamma=\V_1$ in the second inequality, and in the third inequality we applied the triangle inequality to get that
\[
-\V_1\V_2|x-y|\leq-\V_1\V_2|x-z|+\V_1\V_2|y-z|.
\]
Since $0<\V_1<\V_2<1$, it follows that $\CP_{1-\V_1}\leq\CP_{(1-\V_2)\V_1}$. So we obtain
\begin{align}\label{Est:BV2}
\left|\dot\CB V_0^2g\right|\leq\left(h\CP_{(1-\V_2)\V_1}\right)^2\varPhi_{\V_1\V_2}|g|,
\end{align}
which proves the desired estimate for the case $N=2$.\smallskip

Next, by using the estimates (\ref{Est:BV1}), (\ref{Est:P1V1}), and (\ref{Est:BV2}), we get
\begin{align*}
\left|\dot\CB V_0^3g\right|
&\leq c_0h^2\CP_{(1-\V_2)\V_1}^2\varPhi_{\V_1\V_2}\left(\CP_{1-\V_1}\varPhi_{\V_1}|g|\right),\\
&\leq c_0Lh^2\CP_{(1-\V_2)\V_1}^2\int_{\R^n\times\R^n}\left(\LL_0|y|^\S+1\right)e^{-\V_1\V_2|x-y|}e^{-\V_1|y-z|}|g(z)|dzdy,\\
&\leq c_0Lh^2\CP_{(1-\V_2)\V_1}^2\int_{\R^n\times\R^n}\left[\LL_0\left(|x|+\frac{\S}{(1-\V_3)\V_1\V_2}\right)^\S+1\right]e^{-\V_1\V_2\V_3|x-y|}e^{-\V_1|y-z|}|g(z)|dzdy,\\
&\leq c_0Lh^2\CP^2_{(1-\V_2)\V_1}\CP_{(1-\V_3)\V_1\V_2}\varPhi_{\V_1\V_2\V_3}|g|\int_{\R^n}e^{-\V_1(1-\V_2\V_3)|y-z|}dy,\\
&\leq h^3\CP_{(1-\V_3)\V_1\V_2}^3\varPhi_{\V_1\V_2\V_3}|g|.
\end{align*}
Therefore, we obtain by induction that
\begin{align*}
\left|\dot\CB V_0^Ng\right|\leq\left(h\CP_{(1-\V_N)\V_1\cdots\V_{N-1}}\right)^N\varPhi_{\V_1\cdots\V_N}|g|,
\end{align*}
which is the desired estimate.\QED

\end{proof}

\begin{remark}\
\begin{enumerate}
\item[(i)] The estimate in Theorem \ref{Thm:Exest} relies on the technical result Lemma \ref{Lem:tech} (i), Lemma \ref{Lem:Ex1}, and Lemma \ref{Lem:Ex2}, so it is true also if we replace $\CB$ with the operator $\CH$ satisfying (\ref{K}).
\item[(ii)] The result of Theorem \ref{Thm:Exest} means that $\CB V_0$ and $\CB V_0^N$ are essentially the multiplications by a power function to the usual convolution operator with kernel $e^{-\r|x|}$ ($0\leq\r<1$). Also observe that we can take $\r\to1$ arbitrarily close with the trade-off that $h\to\I$ and $\gamma\to0$.
\end{enumerate}

\end{remark}

For a time-dependent potential satisfying (\ref{Hyp:Vtime}), we have the following result.

\begin{corollary}\label{Cor:BVd}

Assume $V=V(x,t)$ satisfies (\ref{Hyp:Vtime}) and let $\t_0\in(0,T)$. Then for any $0<\V_1<\cdots<\V_N<1$, there is a constant $h=h(n,\|\LL\|_{L^\I(0,\t_0)},\S,\V_1)>0$ such that
\begin{align}
\left|\dot\CB V^Ng\right|\leq\left(h\CP_{\gamma,\O V_0}\right)^N\varPhi_{\r}|g|\quad\mbox{on $\R^n\times[0,\t_0]$},
\end{align}
where $\gamma=(1-\V_N)\V_1\cdots\V_{N-1}$, $\r=\V_1\cdots\V_N$, and
\begin{align}
\O V_0:=\|\LL\|_{L^\I(0,\t_0)}|x|^\S+1.
\end{align}
\end{corollary}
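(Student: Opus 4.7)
The corollary is a direct reduction to the time-independent setting of Theorem \ref{Thm:Exest}, so the plan is to dominate $V(\cdot,t)$ by a time-independent majorant on $[0,\t_0]$ and exploit the positivity of the Bessel kernel to transfer this domination to the iterated operator.

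First, since $\LL \in B_{loc}([0,T))$ and $\t_0 < T$, the quantity $\|\LL\|_{L^\I(0,\t_0)}$ is finite. The hypothesis (\ref{Hyp:Vtime}) then gives the pointwise bound
\[
|V(y,t)| \leq \LL(t)|y|^\S + 1 \leq \|\LL\|_{L^\I(0,\t_0)}|y|^\S + 1 = \O V_0(y)
\]
for every $(y,t) \in \R^n \times [0,\t_0]$. Note that $\O V_0$ is of exactly the form (\ref{Hyp:V0}) with $\LL_0 = \|\LL\|_{L^\I(0,\t_0)}$.

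Next, I would use the fact that $B \geq 0$ (which is immediate from Lemma \ref{Lem:B2side}) to pass the absolute value inside the integral. This yields, for each fixed $t \in [0,\t_0]$,
\[
|\dot\CB V g|(x,t) \leq \int_{\R^n} B(x-y)\,|V(y,t)|\,|g(y)|\,dy \leq \int_{\R^n} B(x-y)\,\O V_0(y)\,|g(y)|\,dy = \dot\CB \O V_0\,|g|(x).
\]
The main step is then to iterate this estimate: assuming by induction that $|\dot\CB V^{N-1} h| \leq \dot\CB \O V_0^{N-1}\,|h|$ for every $h$, I apply the inequality above with $g$ replaced by $\dot\CB V g$ and then use positivity of $B$ together with the inductive hypothesis to obtain
\[
|\dot\CB V^N g| = |\dot\CB V^{N-1}(\dot\CB V g)| \leq \dot\CB \O V_0^{N-1}\,|\dot\CB V g| \leq \dot\CB \O V_0^{N-1}(\dot\CB \O V_0\,|g|) = \dot\CB \O V_0^N\,|g|,
\]
uniformly for $t \in [0,\t_0]$. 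The only subtlety here is that the nonnegativity of the kernel $B$ is what allows the inductive step to preserve the domination; without it one would not be able to substitute into a pointwise inequality.

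Finally, I invoke Theorem \ref{Thm:Exest} directly applied to the time-independent potential $\O V_0$ with parameters $\LL_0 = \|\LL\|_{L^\I(0,\t_0)}$ and the given $\V_1,\ldots,\V_N$. This produces a constant $h = h(n,\|\LL\|_{L^\I(0,\t_0)},\S,\V_1) > 0$ such that
\[
\dot\CB \O V_0^N |g| \leq (h\,\CP_{\gamma,\O V_0})^N\,\varPhi_\r |g|,
\]
with $\gamma = (1-\V_N)\V_1\cdots\V_{N-1}$ and $\r = \V_1\cdots\V_N$. Chaining this with the domination established above yields the desired estimate on $\R^n \times [0,\t_0]$. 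There is no genuine obstacle here; the corollary is essentially a bookkeeping statement that combines the kernel positivity of $\CB$ with the sup-norm majorization of $\LL$ on the time interval $[0,\t_0]$.
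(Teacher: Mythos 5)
Your proposal is correct and follows essentially the same route as the paper: dominate $|V(y,t)|$ on $[0,\t_0]$ by the time-independent majorant $\O V_0$, exploit the nonnegativity of the Bessel kernel $B$ to propagate this domination through the iterates so that $|\dot\CB V^N g|\leq\dot\CB\O V_0^N|g|$, and then invoke Theorem~\ref{Thm:Exest}. Your version is slightly more careful than the paper's compressed argument, which omits an absolute value on $V(y,t)$ inside the first integral and does not spell out the inductive step explicitly.
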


\begin{proof}

The proof follows from the preceding theorem together with the fact that
\[
\left|\dot\CB Vg\right|\leq\int_{\R^n}B(x-y)V(y,t)|g(y)|dy\leq\int_{\R^n}B(x-y)\O V_0(y,t)|g(y)|dy=\dot\CB\O V_0|g|
\]
and $|\dot\CB V^Ng|\leq\dot\CB\O V_0^N|g|$.\QED

\end{proof}

Now we establish the existence of one-parameter (semi)groups $e^{t\dot\CB V}$.

\begin{theorem}[One-parameter semigroup]\label{Thm:Green}

Let $V:\R^n\times[0,T)\to\R$ be a function satisfying (\ref{Hyp:Vtime}) and $\t_0\in(0,T)$. Assume that
\[
\S\in[0,1).
\]
Then for each $\r>0$ and any function $\vp\in L^1_{loc}(\R^n)$ such that $e^{-\r|\cdot|}\ast|\vp|<\I$ on $\R^n$, the series
\begin{align}
e^{t\dot\CB V}\vp:=\sum_{k=0}^\I\frac{t^k}{k!}\dot\CB V^k\vp
\end{align}
converges at each point on $\R^n\times[0,T)$. In fact, the series converges uniformly on compact subsets to a continuous function. 

\end{theorem}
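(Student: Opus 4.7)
The plan is to apply Corollary~\ref{Cor:BVd} with a carefully $N$-dependent choice of the parameters $\V_i$, and then show the resulting pointwise bound is summable term by term. Fix $\t_0\in(0,T)$ and a compact set $K\subset\R^n$ with $R:=\max_K|x|$; I will establish absolute and uniform convergence of the series on $K\times[0,\t_0]$, which together with continuity of the partial sums yields the theorem.

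Choose $\r^*\in(\r,1)$ (the relevant regime is $\r<1$ in view of the solution space $\CE=\bigcup_{\r\in[0,1)}\CE_\r$) and fix $\V_*\in(\r^*,1)$. For each $N\geq1$, set $\V_1^{(N)}=\V_*$, $\V_N^{(N)}=1-1/N$, and distribute $\V_2^{(N)},\ldots,\V_{N-1}^{(N)}$ strictly increasing in $(\V_*,1-1/N)$ so that $\prod_{i=1}^N\V_i^{(N)}=\r^*$; this is feasible for all sufficiently large $N$ because the required geometric mean of $\V_2,\ldots,\V_{N-1}$ tends to $1$. Three properties then hold: (a) $\prod\V_i^{(N)}=\r^*>\r$, hence $\varPhi_{\r^*}|\vp|\leq\varPhi_\r|\vp|<\I$ by monotonicity in the exponential weight; (b) the constant $h$ from Corollary~\ref{Cor:BVd} depends only on $\V_*$, $\O\LL_0:=\|\LL\|_{L^\I(0,\t_0)}$, $\S$, and $n$, so $h$ is uniform in $N$; (c) $\gamma_N:=(1-\V_N^{(N)})\V_1^{(N)}\cdots\V_{N-1}^{(N)}=(\r^*/\V_N^{(N)})/N\geq\r^*/N$.

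By Corollary~\ref{Cor:BVd} and Lemma~\ref{Lem:tech}(i), which gives $(|x|+\S/\gamma_N)^\S\leq|x|^\S+(\S N/\r^*)^\S$, I obtain on $K\times[0,\t_0]$
\begin{equation*}
\frac{t^N}{N!}\left|\dot{\CB}V^N\vp\right|\leq\varPhi_\r|\vp|\cdot\frac{\t_0^Nh^N\left(\O\LL_0 R^\S+\O\LL_0(\S N/\r^*)^\S+1\right)^N}{N!}\leq C_0\varPhi_\r|\vp|\cdot\frac{(C_1N^\S)^N}{N!},
\end{equation*}
with $C_0, C_1$ independent of $N$. By Stirling, $(C_1N^\S)^N/N!\lesssim(eC_1/N^{1-\S})^N/\sqrt{N}$, which is summable in $N$ for every $t$ precisely because $\S<1$. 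Hence the series converges absolutely and uniformly on $K\times[0,\t_0]$.

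Continuity of the limit is inherited from continuity of the partial sums: each $\dot\CB V^k\vp$ is continuous in $x$ by dominated convergence (from $B\in L^1_{\rm loc}$ and local integrability of $V\vp$), and in $t$ under the regularity implicit in the setup; uniform convergence of continuous functions preserves continuity on compacta. The main technical obstacle is the tension among the constraints (a)--(c): pushing $\V_N^{(N)}\to 1$ is required to keep $\prod\V_i^{(N)}$ above $\r$, but this forces $\gamma_N\to 0$ at rate $\asymp 1/N$, and it is exactly the hypothesis $\S\in[0,1)$ that permits the resulting $N^{\S N}$ factor to be absorbed by $N!$.
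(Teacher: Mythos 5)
Your proof takes the same overall route as the paper --- both rest on Corollary~\ref{Cor:BVd} and on tuning the $\V_i$ so that $\prod_i\V_i$ stays above $\r$ while $\gamma_N$ does not decay faster than $1/N$ --- but the execution differs: the paper fixes a single infinite sequence $\V_k=1-\theta k^{l-1/\S}$ with $0<l<1/\S-1$ and controls the two sub-series $J_N,K_N$ via the ratio test, whereas you pick a fresh $N$-tuple for each $N$ and directly Stirling-estimate the $N$th term. Pinning $\V_1^{(N)}=\V_*$ so that $h$ is uniform in $N$ is exactly the right move, and the $(C_1N^\S)^N/N!$ summability argument is sound and depends on $\S<1$ in the same way as the paper's.

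However, the feasibility step contains a genuine gap. The assertion that your construction ``is feasible for all sufficiently large $N$ because the required geometric mean of $\V_2,\ldots,\V_{N-1}$ tends to $1$'' is a non-sequitur: the ceiling $\V_N^{(N)}=1-1/N$ also tends to $1$, so what matters is the comparison of rates. A short computation gives that the required geometric mean of the $N-2$ middle factors equals
\begin{equation*}
\Bigl(\tfrac{\r^*}{\V_*(1-1/N)}\Bigr)^{1/(N-2)}=1-\frac{\bigl|\log(\r^*/\V_*)\bigr|}{N}+O\bigl(N^{-2}\bigr),
\end{equation*}
while every $\V_i^{(N)}$ must lie strictly below $1-1/N$, which forces that geometric mean below $1-1/N$. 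For large $N$ this is compatible only when $\bigl|\log(\r^*/\V_*)\bigr|>1$, i.e.\ $\r^*<\V_*e^{-1}<e^{-1}$. Since the theorem is only interesting for $\r$ up to $1$, one typically has $\r^*>\r\geq e^{-1}$, in which case your scheme is infeasible for every large $N$ (already $\r=1/2$, $\r^*=3/5$, $\V_*=7/10$ breaks it). The fix is small: set $\V_N^{(N)}=1-c/N$ with a fixed constant $c\in\bigl(0,-\log(\r^*/\V_*)\bigr)$; the product constraint is then feasible, and $\gamma_N\geq c\r^*/N$ remains of order $1/N$, so your Stirling bound and everything downstream carries over verbatim. (Equivalently, you could drop the exact constraint $\prod_i\V_i^{(N)}=\r^*$ and only require $\prod_i\V_i^{(N)}\geq\r$, which is all the monotonicity $\varPhi_{\r'}\leq\varPhi_\r$ for $\r'\geq\r$ actually needs.)
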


\begin{proof}

We shall prove the convergence by selecting an increasing sequence of real numbers $0<\V_1<\V_2<\cdots<\V_k<\cdots<1$ and applying Corollary \ref{Cor:BVd}. The precise specifications about this sequence will be described during the proof. Let us denote
\begin{align*}
\gamma_k:=(1-\V_k)\V_1\cdots\V_{k-1},\quad\r_k:=\V_1\cdots\V_k.
\end{align*}
Fix $\t_0\in(0,T)$ and denote 
\[
\O V_0=\LL_0|x|^\S+1,\quad\LL_0=\|\LL\|_{L^\I(0,\t_0)},\quad\CP_{\gamma}=\CP_{\gamma,\O V_0}.
\]
By the triangle inequality and Corollary \ref{Cor:BVd} we can estimate the finite sum
\begin{align}\label{Sem:Est1}
\left|\sum_{k=0}^N\frac{t^k}{k!}\dot\CB V^k\vp\right|&\leq\sum_{k=0}^N\frac{t^k}{k!}\left|\dot\CB V^k\vp\right|\leq\sum_{k=0}^N\frac{t^k}{k!}\left(h\CP_{\gamma_k}\right)^k\varPhi_{\r_k}|\vp|=:\vp_N.
\end{align}
Also by Lemma \ref{Lem:tech} we have
\begin{align}\label{Sem:Est2}
\CP_{\gamma_k}^k=\left[\LL_0\left(|x|+\frac{\S}{\gamma_k}\right)^\S+1\right]^k
\leq(2\LL_0)^k|x|^{\S k}+2^k\left[\LL_0\left(\frac{\S}{\gamma_k}\right)^\S+1\right]^k.
\end{align}
From the estimates (\ref{Sem:Est1}) and (\ref{Sem:Est1}), we obtain
\begin{align*}
&|\vp_N|\leq J_N+K_N\quad\mbox{where}\,\,\begin{cases}
\displaystyle J_N:=\sum_{k=0}^N\frac{(2h\LL_0t)^k}{k!}|x|^{\S k}\varPhi_{\r_k}|\vp|,\\
\vspace{-10pt}\\
\displaystyle K_N:=\sum_{k=0}^N\frac{(2ht)^k}{k!}\left[\LL_0\left(\frac{\S}{\gamma_k}\right)^\S+1\right]^k\varPhi_{\r_k}|\vp|.
\end{cases}
\end{align*}

We now impose the first condition on $\V_k$. Assume $\V_k$ has the property that the infinite product 
\begin{align}
\r_\ast:=\prod_{k=1}^\I\V_k>0.
\end{align}
This can be achieved precisely when $\sum(1-\V_k)<\I$. Then $\varPhi_{\r_k}|\vp|\leq\varPhi_{\r_\ast}|\vp|$ for all $k$, so the following estimate for $J_N$ as $N\to\I$ is true:
\begin{align}\label{Linfty}
\limsup_{N\to\I}J_N&\leq L_\I\varPhi_{\r_\ast}|\vp|,\quad L_\I(x,t):=e^{2h\LL_0t|x|^\S}.
\end{align}
Next we consider $K_N$. If $\S=0$, then clearly
\[
K_N\to e^{2ht}\varPhi_{\r_\ast}|\vp|.
\]
Assume that $\S>0$. To obtain the convergence for $K_N$ we further restrict $\V_k$ as follows. Fix $\theta\in(0,1)$ small to be specified and choose $l>0$ so that $l-1/\S<-1$. The latter can be done because $\S\in(0,1)$. Now we take the sequence 
\begin{align}
\V_k=1-\theta k^{l-1/\S}\quad(k\geq1).
\end{align}
Since $\sum\theta\cdot k^{l-1/\S}<\I$, it follows that $\r_\ast>0$. In addition, we have $\gamma_k\geq(1-\V_k)\r_\ast=\theta k^{l-1/\S}\r_\ast$, hence the following estimate is true
\begin{align*}
\limsup_{N\to\I}K_N&\leq \sum_{k=0}^\I\frac{(2ht)^k}{k!}\left[\LL_0\left(\frac{\S}{\theta\r_\ast}\right)^\S k^{1-\S l}+1\right]^k\varPhi_{\r_\ast}|\vp|,\\
&\leq\sum_{k=0}^\I\frac{(2hFt)^k}{k!}k^{k(1-\S l)}\varPhi_{\r_\ast}|\vp|=:M_\I\varPhi_{\r_\ast}|\vp|,
\end{align*}
where
\begin{align}\label{Minfty}
M_\I(t):=\sum_{k=0}^\I\frac{(2hFt)^k}{k!}k^{k(1-\S l)},\quad F:=\LL_0\left(\frac{\S}{\theta\r_\ast}\right)^\S+1.
\end{align}
By the ratio test, then the series $M_\I$ is convergence for all $t$. \smallskip

Now we further analyse $\r_\ast$ so that $\varPhi_{\r_\ast}|u_0|<\I$. Note that $1-x\geq e^{-2x}$ for all $0\leq x\leq(\ln 2)/2$. By taking $\theta>0$ sufficiently small we have
\begin{align}
\r_\ast=\prod_{k=1}^\I(1-\theta k^{l-1/\S})\geq\exp\left(-2\theta\sum_{k=1}^\I k^{l-1/\S}\right).
\end{align}
Thus $\r_\ast$ can be arbitrary close to $1$ by taking $\theta\to0$. Moreover, we choose $0<\th\leq\th_0$ where
\begin{align}
\th_0=\frac{1}{2}\min\left\{\ln2,\frac{-\log\r}{\zeta\left(\frac{1}{\S}-l\right)}\right\},
\end{align}
where $\zeta$ is the Riemann zeta function, then we obtain $\r_\ast\geq\r$. So $\varPhi_{\r_\ast}|\vp|<\I$ on $\R^n$. This implies the pointwise convergence of $e^{t\dot\CB V}\vp=\lim_{N\to\I}\vp_N$ on $\R^n\times[0,\t_0]$. But $\t_0<T$ is arbitrary, therefore we have the convergence on $\R^n\times[0,T)$. The uniform convergence on compact subsets is obvious.\QED

\end{proof}

\begin{corollary}\label{Cor:EstBV}

Under the same conditions as Theorem \ref{Thm:Green}, the following estimate holds
\begin{align}
\left|e^{t\dot\CB V}\vp\right|\leq\varGamma(x,t;\S,\r,\|\LL\|_{L^\I(0,\t_0)})\,\varPhi_{\r}|\vp|\quad\mbox{on $\R^n\times[0,\t_0]$},
\end{align} 
where $\varGamma>0$ is the function defined by
\begin{align}
\varGamma=L_\I(x,t)+M_\I(t)
\end{align}
and $L_\I,M_\I$ are defined by (\ref{Linfty}) and (\ref{Minfty}) respectively.

\end{corollary}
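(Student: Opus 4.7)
The plan is to simply repackage the pointwise estimates that were already assembled inside the proof of Theorem \ref{Thm:Green}; no fundamentally new calculation is required, only careful bookkeeping at the end of that argument. Recall that in the proof of Theorem \ref{Thm:Green} we set $\vp_N := \sum_{k=0}^N \tfrac{t^k}{k!}\dot\CB V^k \vp$ and, after selecting the sequence $\V_k = 1-\theta k^{l-1/\S}$ with $\theta$ small enough that $\r_\ast := \prod_k \V_k \geq \r$, we derived the dominating splitting $|\vp_N| \leq J_N + K_N$ together with the explicit limsup bounds $\limsup J_N \leq L_\I(x,t)\,\varPhi_{\r_\ast}|\vp|$ and $\limsup K_N \leq M_\I(t)\,\varPhi_{\r_\ast}|\vp|$.

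First I would pass to the limit $N\to\I$ in the inequality $|\vp_N|\leq J_N+K_N$. The left-hand side converges pointwise (indeed uniformly on compacta) to $|e^{t\dot\CB V}\vp|$ by Theorem \ref{Thm:Green}, and the two terms on the right have been shown to possess the claimed limsups. This yields
\begin{equation*}
\left|e^{t\dot\CB V}\vp\right|\leq\bigl(L_\I(x,t)+M_\I(t)\bigr)\,\varPhi_{\r_\ast}|\vp|\quad\text{on }\R^n\times[0,\t_0].
\end{equation*}

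Next, I would eliminate the dependence on the auxiliary parameter $\r_\ast$ in favor of the given $\r$. Since we arranged $\r_\ast\geq\r$, the pointwise inequality $e^{-\r_\ast|x-y|}\leq e^{-\r|x-y|}$ yields $\varPhi_{\r_\ast}|\vp|\leq\varPhi_{\r}|\vp|$ everywhere, which substituted above gives the stated inequality with $\varGamma = L_\I+M_\I$.

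There is no real obstacle here; the one small subtlety to be explicit about is that the constants $h$, $\theta$, $l$, and the resulting $L_\I$, $M_\I$ depend only on $n$, $\S$, $\r$, and $\|\LL\|_{L^\I(0,\t_0)}$, so that $\varGamma$ is a bona fide function of $(x,t;\S,\r,\|\LL\|_{L^\I(0,\t_0)})$ as the notation in the statement suggests. Once this dependence is recorded, the corollary follows verbatim from the estimates already proved.
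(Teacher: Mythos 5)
Your argument is correct and is exactly the (implicit) proof the paper intends: the corollary is a direct bookkeeping consequence of the estimates already established in the proof of Theorem \ref{Thm:Green}, namely passing to the limit in $|\vp_N|\leq J_N+K_N$, invoking the limsup bounds for $J_N$ and $K_N$ in terms of $L_\I$ and $M_\I$, and finally using $\r_\ast\geq\r$ to replace $\varPhi_{\r_\ast}|\vp|$ by $\varPhi_{\r}|\vp|$. Your closing remark about tracking the dependence of $h$, $\theta$, $l$ (hence of $L_\I$, $M_\I$) on $n$, $\S$, $\r$, $\|\LL\|_{L^\I(0,\t_0)}$ is the right thing to make explicit, and matches the way the constant $\varGamma$ is parametrized in the statement.
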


\begin{remark}\

\begin{enumerate}
\item[(i)] The function $\varGamma$ behaves regularly if $\S$ and $\r$ are strictly less than 1. If the (minimal) parameter $\r\to1$ in the condition of $\vp$, we have $h\to\I$, $\th_0\to0$, and $L_\I,M_\I\to\I$; hence 
\[
\varGamma\to\I\quad\mbox{for each $(x,t)$}.
\]
The same conclusion holds if $\S\to1$ because $\th_0\to0$.
\item[(ii)] The proof of the preceding theorem fails when $\S=1$. In this case, it is not possible to select $l$ so that $l-1/\S<-1$ (which implies $\r_\ast>0$) and at the same time $1-\S l<1$ (which implies the convergence of $M_\I$).
\item[(iii)] Similar result as Theorem \ref{Thm:Green} holds if $\CB$ is replaced by $\CH$ satisfying (\ref{K}).
\end{enumerate}

\end{remark}

\section{Existence of solutions and comparison principles}\label{Sec:ExistComp}

In this section we prove some existence and uniqueness of solutions for the equation (\ref{Eqn:Lin}). Some comparison principles are also presented. We first show that the problem (\ref{Eqn:Lin}) with the potential
\begin{align}
a=\LL_0|x|^\S\,\,\mbox{where $\LL_0>0$, $\S\in[0,1)$ are constants},
\end{align}
admits a unique global solution (in the sense of (\ref{Eqn:Milds})) provided the initial function does not grow too fast. Here we do not put the positivity assumption on the initial function and the solution, but, as it will be revealed, the positivity is preserved. The result will be generalized to more general potentials later.\smallskip

Recall the notation from the previous section:
\begin{align*}
&V_0:=\LL_0|x|^\S+1,\quad 
\CP_{\gamma,V_0}:=\LL_0\left(|x|+\frac{\S}{\gamma}\right)^\S+1,\\
&\dot\CB V_0f=B\ast(V_0f),\quad\dot\CB V_0^N:=\dot\CB V_0^{N-1}\circ\dot\CB V_0.
\end{align*}
To find the solution $\mu:=e^tu$ of (\ref{Eqn:Milds}), we consider the Picard iteration scheme, i.e.\ consider
\begin{align*}
&U_1:=u_0+\int_0^t\dot\CB V_0u_0d\t=u_0+t\dot\CB V_0u_0,\\
&U_2:=u_0+\int_0^t\dot\CB V_0U_1d\t=u_0+t\dot\CB V_0u_0+\frac{t^2}{2!}\dot\CB V_0^2u_0,\\
&U_N:=u_0+\int_0^t\dot\CB V_0U_{N-1}d\t\quad(N\geq2),
\end{align*}
which by induction we get
\begin{align}\label{Series:UN}
U_N=\sum_{k=0}^N\frac{t^k}{k!}\dot\CB V_0^ku_0\quad(N\geq1).
\end{align}
It should be remarked that the above identity holds because of the time-independence of $V_0$ on $t$.

\begin{theorem}[Global well-posedness I]\label{Thm:ExConst}

Let $\LL_0>0$ and $\S\in[0,1)$. Assume $u_0\in L^1_{loc}(\R^n)$ and there is a constant $\r\in[0,1)$ such that 
\begin{align}
\int_{\R^n}e^{-\r|x-y|}|u_0(y)|dy<\I\quad\mbox{on $\R^n$}.
\end{align}
Then $\{U_N\}_{N=1}^\I$ in (\ref{Series:UN}) converges on $\R^n\times(0,\I)$ to a continuous function $\mu$. 
Moreover, $u=e^{-t}\mu$ is the unique global solution of (\ref{Eqn:Lin}) with $a(x,t)=\LL_0|x|^\S$, and, in addition, $u\geq0$ whenever $u_0\geq0$.

\end{theorem}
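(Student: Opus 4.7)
The plan is to deduce convergence directly from Theorem \ref{Thm:Green}, then verify the integral equation by a dominated-convergence argument, and finally address uniqueness and positivity separately.

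First, I would observe that the potential $V_0=\LL_0|x|^\S+1$ satisfies hypothesis (\ref{Hyp:Vtime}) with the constant (hence locally bounded) function $\LL(t)\equiv\LL_0$, and that the assumption $\varPhi_\r|u_0|<\I$ is precisely what Theorem \ref{Thm:Green} requires. Since $U_N$ is the $N$th partial sum of $e^{t\dot\CB V_0}u_0$, Theorem \ref{Thm:Green} immediately yields pointwise convergence of $U_N$ on $\R^n\times[0,\I)$ to a limit $\mu$ that is continuous (uniform convergence on compact subsets). Corollary \ref{Cor:EstBV} furnishes the uniform majorant $|U_N|\leq\varGamma(x,t)\varPhi_\r|u_0|$, which will be used as a dominating function below.

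Next, to show $\mu$ satisfies the mild formulation (\ref{Eqn:Milds}), I would use the identity
\begin{align*}
U_N=u_0+\int_0^t\dot\CB V_0U_{N-1}(\t)\,d\t,
\end{align*}
which follows by termwise integration of the finite sum (since $V_0$ is time-independent). On the right-hand side, $\dot\CB V_0U_{N-1}$ is controlled by $\dot\CB V_0(\varGamma(\cdot,\t)\varPhi_\r|u_0|)$, which, by Lemma \ref{Lem:Ex2}, is locally bounded in $(x,\t)$. Dominated convergence then gives $\mu=u_0+\int_0^t\dot\CB V_0\mu(\t)\,d\t$, so $u:=e^{-t}\mu\in C([0,\I);\CE_\r)$ is a solution of (\ref{Eqn:Lin}).

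For uniqueness, I would take two solutions $\mu,\tilde\mu\in C([0,\I);\CE_\r)$ and set $w=\mu-\tilde\mu$, which satisfies the homogeneous equation $w(t)=\int_0^t\dot\CB V_0w(\t)\,d\t$. Iterating this identity $N$ times yields
\begin{align*}
w(t)=\int_{0<\t_N<\cdots<\t_1<t}\dot\CB V_0^N w(\t_N)\,d\t_N\cdots d\t_1.
\end{align*}
Theorem \ref{Thm:Exest}, applied with a sequence $\V_k$ chosen (as in the proof of Theorem \ref{Thm:Green}) so that $\r_\ast\geq\r$, bounds $|\dot\CB V_0^Nw(\t_N)|$ by $(h\CP_{\gamma_N})^N\varPhi_{\r_\ast}|w(\t_N)|$. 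Combined with local boundedness of $w$ in the weighted norm and the factor $t^N/N!$ from the time integral, this gives pointwise decay to $0$ as $N\to\I$, hence $w\equiv0$. Positivity is then immediate: if $u_0\geq0$ then every term $\dot\CB V_0^ku_0$ is nonnegative (because $B\geq0$ and $V_0\geq0$), so $\mu=\sum_{k\geq0}\tfrac{t^k}{k!}\dot\CB V_0^ku_0\geq0$ and hence $u\geq0$.

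The main obstacle I expect is the uniqueness step: one must choose the auxiliary sequence $\{\V_k\}$ so that the limiting decay rate $\r_\ast$ still dominates the given $\r$ (so that $\varPhi_{\r_\ast}|w|<\I$), while simultaneously ensuring the factorial growth of the iterated kernel constants $h^N\CP_{\gamma_N}^N$ is beaten by $t^N/N!$. The quantitative bookkeeping inside Theorem \ref{Thm:Exest}, together with the estimates (\ref{Linfty})--(\ref{Minfty}) from the proof of Theorem \ref{Thm:Green}, should make this routine but is the most delicate point of the argument.
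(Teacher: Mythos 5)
Your proof is correct and reaches the same conclusion, but the uniqueness step is handled by a genuinely different route than the paper's. After establishing convergence via Theorem~\ref{Thm:Green} and verifying the mild formulation via dominated convergence (both essentially as the paper does, though you spell out the dominating function more explicitly via Corollary~\ref{Cor:EstBV} and Lemma~\ref{Lem:Ex2}), the paper simply defers uniqueness forward to the comparison principle, Theorem~\ref{Comp1}. That later argument works by introducing the time-monotone majorant $w(x,t):=\max_{0\le\t\le t}(\mu-\nu)_+(x,\t)$, which allows the claim $w\leq\frac{\t^{N+1}}{(N+1)!}\dot\CB V_0^N w$, and then invokes the series convergence from Theorem~\ref{Thm:Green} applied to $\varphi=w(\cdot,\t_0)$ to force $w\equiv0$. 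You instead iterate the homogeneous Volterra equation directly, pulling the (time-independent) operator $\dot\CB V_0$ through the time integrals to land on the simplex representation $w(t)=\int_{0<\t_N<\cdots<\t_1<t}\dot\CB V_0^N w(\t_N)\,d\vec\t$, and then use Theorem~\ref{Thm:Exest} together with the factor $t^N/N!$ to kill the limit. Both arguments use the same quantitative engine (the $\V_k$-bookkeeping of Theorem~\ref{Thm:Exest} and the ratio-test convergence from Theorem~\ref{Thm:Green}); the paper's version buys the full order-preserving comparison principle as a byproduct, whereas yours is self-contained and avoids the forward reference, at the cost of proving only uniqueness. Note that both your argument and the paper's share the same unaddressed subtlety: one must justify that the relevant quantity (your $\sup_{\t\in[0,t]}\varPhi_{\r_\ast}|w(\t)|$, their $w(\cdot,\t_0)\in\CE$) is finite for a \emph{single} choice of weight exponent $\r_\ast<1$ uniformly over the compact time interval, which relies on the continuity of $\mu,\tilde\mu$ into the inductive-limit space $\CE$; neither proof makes this explicit.
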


\begin{proof}
By Theorem \ref{Thm:Green} the first assertion is true and we can define
\begin{align}
\mu:=\lim_{N\to\I}U_N=e^{t\dot\CB V_0}u_0\quad\mbox{on $\R^n\times[0,\I)$}.
\end{align}
Also, the uniform convergence on compact subsets of $\{U_N\}_{N=1}^\I$ and the dominated convergence theorem give that the function $\mu$ satisfies
\begin{align}
\mu(t)=u_0+\int_0^t(\dot\CB V_0\mu)(\t)d\t.
\end{align}
Thus $u=e^{-t}\mu$ is a solution for (\ref{Eqn:Lin}) in the sense of (\ref{Eqn:Milds}). That $u_0$ implies $u\geq0$ is obvious. The uniqueness of solution is true by the comparison principle (Theorem \ref{Comp1}) below.\QED

\end{proof}


\begin{definition}[Time-independent potentials]

The \textit{Green operator} for (\ref{Eqn:Lin}) with $a=\LL_0|x|^\S$ where $\LL_0>0$ and $\S\in[0,1)$ is defined to be
\begin{align}
\CG_{a}(t)=e^{t\dot\CB(\Lap+\LL_0|\cdot|^\S)}:=e^{-t}e^{t\dot\CB(\LL_0|\cdot|^\S+1)}.
\end{align}
It is acting on functions belonging to the space
\[
\CE=\bigcup_{0\leq\r<1}\CE_\r\quad\mbox{where}\quad\CE_\r:=\left\{f\in L^1_{loc}(\R^n):\int_{\R^n}e^{-\r|x-y|}|f(y)|dy<\I\right\}=L^1(\R^n,e^{-\r|\cdot|}).
\]
\end{definition}

\begin{remark}\
\begin{itemize}
\item[(i)] By Theorem \ref{Thm:ExConst} and the preceding definition, if $a=\LL_0|x|^\S$ ($\LL_0>0,\S\in[0,1)$) and $u_0\in\CE$, then
\[
u=\CG_{a}(t)u_0
\]
is the unique global solution to in (\ref{Eqn:Lin}). Using the result of Corollary \ref{Cor:EstBV}, it follows that
\[
\left|\CG_{a}(t)u_0\right|\leq e^{-t}\varGamma(x,t;\S,\r,\LL_0)\,e^{-\r|\cdot|}\ast|u_0|.
\]
We remark that if $u_0\geq0$ then so is $u$, or in other words, $\CG_a(t)$ is a positive operator.
\item[(ii)] $\CE$ is a Fr\'echet space whose metric can be induced by the weighted $L^1$-norms.
\end{itemize}

\end{remark}

\begin{theorem}[Comparison principle I]\label{Comp1}

Let $\LL_0>0$, $\S\in[0,1)$. Assume $u_0,v_0\in\CE$ and $u,v\in C([0,T),\CE)$ satisfy
\begin{align}
\begin{cases}
\displaystyle u(t)\leq u_0+\int_0^t\CB\left((\Lap+\LL_0|\cdot|^\S)u(\t)\right)\,d\t,\\
\vspace{-10pt}\\
\displaystyle v(t)\geq v_0+\int_0^t\CB\left((\Lap+\LL_0|\cdot|^\S)v(\t)\right)\,d\t,
\end{cases}
\end{align}
for all $0<t<T$. We have
\begin{align}
u_0\leq v_0\quad\Rightarrow\quad u\leq v.
\end{align}

\end{theorem}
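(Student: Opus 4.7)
My plan is to deduce the comparison $v \geq u$ from an iterated mild inequality driven by the positivity-preserving operator $\dot\CB V_0$, where $V_0 := 1 + \LL_0|\cdot|^\S \geq 0$. Working in the $\mu$-formulation consistent with the paper's definition of solution \eqref{Eqn:Milds}, and using the identity $\CB((\Lap + \LL_0|\cdot|^\S)\varphi) = -\varphi + \CB(V_0 \varphi)$ which follows from $\CB(1-\Lap) = I$, the sub/supersolution hypotheses are equivalent (after multiplying by $e^t$) to
\begin{align*}
\mu(t) \leq u_0 + \int_0^t \CB(V_0 \mu(\t))\,d\t, \quad \nu(t) \geq v_0 + \int_0^t \CB(V_0 \nu(\t))\,d\t,
\end{align*}
where $\mu := e^t u$ and $\nu := e^t v$. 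Subtracting, with $W := \nu - \mu$ and $w_0 := v_0 - u_0 \geq 0$, yields the key inequality
\begin{align*}
W(t) \geq w_0 + \int_0^t \CB(V_0 W(\t))\,d\t \quad \text{on } \R^n \times [0,T).
\end{align*}

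Since $V_0 \geq 0$ and the Bessel kernel $B \geq 0$, the map $g \mapsto \dot\CB V_0 g$ preserves pointwise ordering. Substituting the lower bound for $W(\t)$ into the integrand $N$ times yields, by induction,
\begin{align*}
W(t) \geq \sum_{k=0}^{N-1} \frac{t^k}{k!}\dot\CB V_0^k w_0 + R_N(t), \quad R_N(t) := \int_{0 \leq s_1 \leq \cdots \leq s_N \leq t} \dot\CB V_0^N W(s_1)\,ds_1\cdots ds_N.
\end{align*}
The partial sum converges as $N \to \infty$ to $e^{t\dot\CB V_0}w_0$ by Theorem \ref{Thm:ExConst}, and this limit is nonnegative because every summand is a composition of positivity-preserving operations applied to $w_0 \geq 0$.

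The main technical step is showing that the remainder $R_N$ vanishes in the limit. Choosing $\V_k = 1 - \th k^{l-1/\S} \nearrow 1$ as in the proof of Theorem \ref{Thm:Green}, with $\th$ so small that $\r_\ast := \prod_k \V_k$ exceeds the index $\r_0 \in [0,1)$ for which $W \in C([0,t_0]; \CE_{\r_0})$, Corollary \ref{Cor:BVd} gives
\begin{align*}
|R_N(t)| \leq \frac{t^N}{N!}\,(h\CP_{\gamma_N})^N \sup_{s \in [0,t]} \varPhi_{\r_\ast}|W(s)|,
\end{align*}
and the right-hand side tends to $0$ since the series it bounds converges absolutely by Theorem \ref{Thm:Green}. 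Letting $N \to \infty$ gives $W \geq e^{t\dot\CB V_0}w_0 \geq 0$, hence $v \geq u$. The main obstacle is precisely the unboundedness of $\dot\CB V_0^N$ emphasized in the introduction, requiring the delicate tradeoff between the power factor $\CP_{\gamma_N}$ and the weight exponent $\r_N$ afforded by the specific choice of $\V_k$ in Theorem \ref{Thm:Green}. Uniqueness of solutions in $\CE$ then follows at once by applying this comparison in both directions when $u_0 = v_0$.
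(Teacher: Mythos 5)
Your proof is correct and reaches the same conclusion, but by a mildly different route from the paper's. The paper introduces the auxiliary running-maximum function
\[
w(x,t):=\max_{0\leq\t\leq t}(\mu-\nu)_+(x,\t),
\]
which is nonnegative and nondecreasing in $t$ by construction; the monotonicity collapses the iterated simplex integrals into a closed-form bound of the type $w(\cdot,t)\leq \frac{t^{N}}{N!}\dot\CB V_0^N w(\cdot,t)$ at the \emph{single} fixed time $t$, with no remainder term to control, and $w\equiv0$ then follows from the decay of the general term of the convergent series in Theorem \ref{Thm:Green} applied to the one function $\vp=w(\cdot,\t_0)$. You instead iterate the Volterra inequality for $W=\nu-\mu$ directly and carry along the remainder $R_N$, which involves $W$ evaluated at a variable time $s_N\in[0,t]$; this forces you to control $\sup_{s\in[0,t]}\varPhi_{\r_\ast}|W(s)|$, which is exactly where the hypothesis $u,v\in C([0,T),\CE)$ and the freedom to push $\r_\ast$ above the relevant index $\r_0$ enter. (For the time-independent potential $V_0$ the pertinent iterate bound is Theorem \ref{Thm:Exest} rather than its time-dependent corollary \ref{Cor:BVd}, though the content is identical.) The extra bookkeeping buys something: you obtain the quantitative lower bound $\nu-\mu\geq e^{t\dot\CB V_0}(v_0-u_0)=e^{t}\CG_a(t)(v_0-u_0)$ rather than merely $\nu\geq\mu$, which is a strictly stronger statement and recovers uniqueness at once. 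Both arguments rest on the same two pillars: order-preservation of $\dot\CB V_0$ (because $V_0\geq0$ and $B\geq0$) and the convergence of $\sum_k\frac{t^k}{k!}\left(h\CP_{\gamma_k}\right)^k\varPhi_{\r_k}|\vp|$ established in Theorem \ref{Thm:Green}, whose general term must therefore vanish.
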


\begin{proof}
Let $V_0=\LL_0|x|^\S+1$, and $\mu=e^tu$, $\nu=e^tv$ which satisfy
\[
\mu(x,t)\leq u_0+\int_0^t(\dot\CB V_0\mu)(x,\t)\,d\t,\quad\nu(x,t)\geq v_0+\int_0^t(\dot\CB V_0\nu)(x,\t)\,d\t.
\]
We also define a function $w\in C([0,T),\CE)$ by
\begin{align}
w(x,t):=\max_{0\leq\t\leq t}(\mu-\nu)_+(x,t)\geq0.
\end{align}
Observe that if $s<\t$ then $w(x,s)\leq w(x,\t)$ for all $x\in\R^n$. This implies 
\begin{align}
(\dot\CB V_0^Nw)(x,s)\leq(\dot\CB V_0^Nw)(x,\t)\quad\mbox{for all $N\geq1$ and $x\in\R^n$}.
\end{align}
Consider
\begin{align*}
(\mu-\nu)(x,t)&\leq(u_0-v_0)+\int_0^t(\dot\CB V_0(\mu-\nu))(x,\t)d\t,\\
&\leq\int_0^t(\dot\CB V_0(\mu-\nu)_+)(x,\t)\,d\t\leq\int_0^t(\dot\CB V_0w)(x,\t)\,d\t.
\end{align*}
So we get a Gronwall type inequality:
\begin{align}\label{Gronwall}
w(x,t)\leq \int_0^t(\dot\CB V_0w)(x,\t)\,d\t.
\end{align}

\begin{claim}
For each integer $N\geq1$, we have
\begin{align}
w\leq\frac{\t^{N+1}}{(N+1)!}\dot\CB V_0^Nw\quad\mbox{on $\R^n\times(0,T)$}.
\end{align}

\end{claim}

\begin{proof}[\textbf{Claim}]
Since $(\dot\CB V_0w)(x,t)$ is increasing in $t$, we have by (\ref{Gronwall}) that
\begin{align}
w(x,t)&\leq(\dot\CB V_0w)(x,t)\int_0^td\t=\frac{t^2}{2!}(\dot\CB V_0w)(x,t),
\end{align}
which is the case $N=1$.\smallskip

Assume the claim is true for an integer $N=k\geq1$. Employing (\ref{Gronwall}), the induction hypothesis, and the fact that $(\dot\CB V_0^{k+1}w)(x,t)$ is increasing in $t$, then we get
\begin{align*}
w(x,t)&\leq\int_0^t\dot\CB V_0\left(\frac{\t^{k+1}}{(k+1)!}(\dot\CB V_0^kw)\right)(x,\t)\,d\t,\\
&\leq(\dot\CB V_0^{k+1}w)(x,t)\int_0^t\frac{\t^{k+1}}{(k+1)!}\,d\t=\frac{t^{k+2}}{(k+2)!}(\dot\CB V_0^{k+1}w)(x,t)
\end{align*}
hence the claim is true for all $N$.\QED

\end{proof}

We continue the proof of the comparison theorem. Fix time $t=\t_0>0$. Applying Theorem \ref{Thm:Green} to the function $\vp=w(\cdot,\t_0)\in\CE$ we find that the series $\sum_{k=0}^\I(\t_0^k/k!)\dot\CB V_0^k\vp$ is convergence. Thus
\begin{align}
\lim_{k\to\I}\frac{\t_0^k}{k!}(\dot\CB V_0^kw)(x,\t_0)=0,
\end{align}
which implies, using the claim above, that
\begin{align}
w(x,\t_0)=0.
\end{align}
This is true for all $x\in\R^n$ and $\t_0>0$, hence $w\equiv0$. We conclude that $u\leq v$.\QED

\end{proof}

Next, we generalize the results to time-dependent potentials. A motivation for the following result can be seen from the following simple  ODE:
\begin{align}
\frac{d}{dt}\vp=B(t)\vp,\quad\vp|_{t=0}=\vp_0\in\R.
\end{align}
By the variation of parameter (or the technique of finding integrating factor), we obtain the solution
\begin{align}
\vp(t)=e^{\int_0^tB(\t)\,d\t}\vp_0.
\end{align}

\begin{theorem}[Global well-posedness II]

Let $a\in L^1_{loc}(\R^n\times[0,\I))$ be a function satisfying (\ref{Hyp:Exist})
where $\LL\in B_{loc}([0,\I))$ and $\S\in[0,1)$. Then for each $u_0\in\CE$ the problem (\ref{Eqn:Lin}) admits a unique global solution $u\in C([0,\I);\CE)$. 
Furthermore, in the case of nonnegative potential, if $u_0\geq0$ then $u\geq0$.

\end{theorem}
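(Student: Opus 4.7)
The plan is to extend the Picard iteration scheme from Theorem~\ref{Thm:ExConst} to the time-dependent setting, using the autonomous estimates of Section~\ref{Sec:Besconv} as pointwise dominators. Set $V(x,t) := a(x,t)+1$ and $\mu := e^t u$, so that (\ref{Eqn:Lin}) is equivalent to the Volterra integral equation
\[
\mu(t) = u_0 + \int_0^t \dot\CB V(\cdot,\t)\mu(\t)\,d\t.
\]
Define the iterates $U_0 := u_0$ and, for $N \geq 1$,
\[
U_N(x,t) := u_0(x) + \int_0^t \bigl(\dot\CB V(\cdot,\t) U_{N-1}(\cdot,\t)\bigr)(x)\,d\t.
\]
Unfolding the recursion, $U_N - U_{N-1}$ is an $N$-fold iterated time integral of $\dot\CB V$ applied to $u_0$ over the ordered simplex $\{0 < \t_1 < \cdots < \t_N < t\}$.

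Fix $\t_0 \in (0,\I)$. By (\ref{Hyp:Exist}) we have the pointwise domination $|\dot\CB V(\cdot,\t) g| \leq \dot\CB \O V_0 |g|$ on $\R^n \times [0,\t_0]$, where $\O V_0(x) := \|\LL\|_{L^\I(0,\t_0)}|x|^\S + 1$, valid for all $g$. Iterating this $N$ times and using that the $N$-simplex has measure $t^N/N!$ yields
\[
|U_N(x,t) - U_{N-1}(x,t)| \leq \frac{t^N}{N!}\,\dot\CB \O V_0^N |u_0|(x) \quad\text{on } \R^n\times[0,\t_0].
\]
Since $u_0 \in \CE_\r$ for some $\r \in [0,1)$ and $\S \in [0,1)$, Theorem~\ref{Thm:Green} applied to the \emph{autonomous} potential $\O V_0$ implies that $\sum_{N=0}^\I \frac{t^N}{N!}\dot\CB \O V_0^N |u_0|$ converges pointwise (and locally uniformly) to a continuous function. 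The telescoping series $u_0 + \sum_{N \geq 1}(U_N - U_{N-1})$ therefore converges absolutely to a continuous limit $\mu$ on $\R^n \times [0,\t_0]$, and the bound from Corollary~\ref{Cor:EstBV} places $\mu(\cdot,t)$ in some $\CE_{\r'}$ with $\r' \in [\r,1)$, locally uniformly in $t$. Dominated convergence then lets us pass to the limit inside the time integral, so $\mu$ solves the Volterra equation on $[0,\t_0]$. Taking $\t_0 \uparrow \I$ yields a global solution $u := e^{-t}\mu \in C([0,\I);\CE)$ of (\ref{Eqn:Lin}). Nonnegativity is automatic: when $a \geq 0$, one has $V \geq 1 > 0$ and $B \geq 0$, so each $U_N$ inherits nonnegativity from $u_0$, and so does $\mu$.

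Uniqueness is obtained by mimicking the Gronwall-type iteration in Theorem~\ref{Comp1}. If $\mu_1, \mu_2 \in C([0,\I);\CE)$ are two solutions with the same initial data, the function $W(x,t) := \max_{0 \leq \t \leq t}|(\mu_1-\mu_2)(x,\t)|$ is non-decreasing in $t$ and satisfies $W(\cdot,t) \leq \int_0^t \dot\CB \O V_0 W(\cdot,\t)\,d\t$. An induction identical to the claim inside the proof of Theorem~\ref{Comp1} then yields, for every $N\geq 1$,
\[
W(x,t) \leq \frac{t^{N+1}}{(N+1)!}\,\dot\CB \O V_0^{N+1} W(\cdot,t)(x).
\]
Applying Theorem~\ref{Thm:Green} to the fixed-$t$ function $W(\cdot,t)$ forces the right-hand side to $0$ as $N\to\I$, and so $W \equiv 0$.

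The main obstacle is verifying that $W(\cdot,t)$ lies in some $\CE_{\r'}$ with $\r' \in [0,1)$, which is what Theorem~\ref{Thm:Green} requires in the uniqueness step; this is not immediate from $\mu_i \in C([0,T);\CE)$ alone. It follows, however, from the a priori pointwise bound $|\mu_i(x,t)| \leq e^{-t}\varGamma(x,t;\S,\r,\|\LL\|_{L^\I(0,t)})\,\varPhi_\r|u_0|(x)$ furnished by Corollary~\ref{Cor:EstBV}: since $\S < 1$, the subexponential growth of $\varGamma$ in $x$ can be absorbed into any $\CE_{\r'}$ with $\r < \r' < 1$, placing $W(\cdot,t)$ in such a space and closing the argument.
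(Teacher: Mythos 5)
Your existence argument reproduces the paper's: dominate the time-dependent iterates pointwise by the autonomous power potential $\O V_0$ on $[0,\t_0]$, i.e., use $|\widehat{\CB}V(t)^N g|\leq \frac{t^N}{N!}\dot\CB\O V_0^N|g|$, and then invoke Theorem~\ref{Thm:Green} to sum the series $\sum_k\widehat{\CB}V(t)^ku_0$ locally uniformly. For uniqueness the paper invokes Comparison Principle~II; you inline the same Gronwall-type iteration, sensibly using the nonnegative dominator $\O V_0$ so that signed potentials are handled. Up to this point the approach and the execution are essentially identical to the paper's (the extra power of $\dot\CB\O V_0$ in your displayed inequality is only a harmless bookkeeping slip).

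Your final paragraph, however, contains a circular step. Corollary~\ref{Cor:EstBV} bounds $e^{t\dot\CB V}\vp$, i.e., the \emph{constructed} series solution; it does not supply an a priori bound for an arbitrary solution $\mu_i\in C([0,\I);\CE)$ of the Volterra equation. To apply it to $\mu_i$ you would first need to know that $\mu_i$ coincides with the series, which is precisely the uniqueness you are trying to prove. You have in fact spotted a genuine subtlety that the paper glosses over (in Theorem~\ref{Comp1} it simply asserts $w\in C([0,T),\CE)$ and never revisits the point). A non-circular fix is to replace the pointwise supremum by $\T D(\cdot,t):=\int_0^t|(\mu_1-\mu_2)(\cdot,\t)|\,d\t$: since each $\mu_i$ is continuous from the compact interval $[0,t]$ into $\CE$, the image set is bounded in $\CE$ and therefore contained in a single $\CE_{\r'}$ with $\r'<1$, so $\T D(\cdot,t)\in\CE_{\r'}$ by Fubini. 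Iterating the Volterra inequality over the ordered time simplex gives $|(\mu_1-\mu_2)(t)|\leq\frac{t^{N-1}}{(N-1)!}\dot\CB\O V_0^N\T D(\cdot,t)$, and Theorem~\ref{Thm:Green} applied to $\T D(\cdot,t)\in\CE_{\r'}$ drives the right-hand side to zero as $N\to\I$, closing the uniqueness argument without any appeal to Corollary~\ref{Cor:EstBV}.
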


\begin{proof}

Let us denote $V=a(x,t)+1$. We introduce the operators
\begin{align*}
\begin{cases}
\displaystyle
\widehat{\CB}V(t)g:=\int_0^t\dot{\CB}V(\t)g\,d\t=\int_0^t\int_{\R^n}B(x-y)V(y,\t)g(y)dy\,d\t,\\
\vspace{-10pt}\\
\displaystyle
\widehat{\CB}V(t)^Ng:=\widehat{\CB}V(t)^{N-1}(\widehat{\CB}V(t)g),\quad(N\geq2).
\end{cases}
\end{align*}
For example, when $N=2$ we have
\begin{align*}
\widehat{\CB}V(t)^2g&=\int_0^t\int_{\R^n}B(x-y)V(y,\t)(\widehat{\CB}V(\t)g)(y)dy\,d\t,\\
&=\int_0^t\int_{\R^n}\int_0^\t\int_{\R^n}B(x-y)B(y-z)V(y,\t)V(z,s)g(z)\,dzdsdyd\t.
\end{align*}
When $g$ also depends on $t$, we put $\widehat{\CB}V(t)g=\int_0^t\dot{\CB}V(\t)g(\t)\,d\t$. If $V$ is time-independent, then 
\begin{align}
\widehat{\CB}V^Ng=\frac{t^N}{N!}\dot{\CB}V^Ng.
\end{align}

Consider the iteration scheme
\begin{align*}
&U_1=u_0+\int_0^t\dot{\CB}V(\t)u_0\,d\t=u_0+\widehat{\CB}V(t)u_0,\\
&U_2=u_0+\int_0^t\dot{\CB}V(\t)U_1(\t)\,d\t,\\
&\hphantom{U_2(t)}=u_0+\widehat{\CB}V(t)U_1=u_0+\widehat{\CB}V(t)u_0+\widehat{\CB}V(t)^2u_0,
\end{align*}
and generally, for any $N\geq1$,
\begin{align}
U_N&=u_0+\int_0^t\dot{\CB}V(\t)U_{N-1}(\t)\,d\t=\sum_{k=0}^N\widehat{\CB}V(t)^ku_0.\label{Iteration1}
\end{align}

Fix $0<T<\I$. We prove the existence of solution for (\ref{Eqn:Lin}) on any $\R^n\times[0,T)$, where $0<T<\I$. For $(x,t)$ in this set, we have
\begin{align}
|V|\leq V_0:=\LL_0|x|^\S+1,\quad\LL_0=\max\{\LL(t):0\leq t\leq T\}.
\end{align}
All the calculations below will be taken at $(x,t)\in\R^n\times[0,T)$. We estimate
\begin{align*}
&|\widehat{\CB}Vg|\leq\dot{\CB}V_0|g|\int_0^td\t=t\dot{\CB}V_0|g|,\\
&|\widehat{\CB}V^Ng|\leq\frac{t^N}{N!}\dot{\CB}V_0^N|g|.
\end{align*}
Thus we have
\begin{align}\label{Est:Global2}
|U_N|&\leq\sum_{k=0}^N\frac{t^k}{k!}\dot{\CB}V_0^k|u_0|\to e^{t\dot{\CB}V_0}|u_0|\quad\mbox{as $N\to\I$}.
\end{align}
So we find that the sequence $\{U_N\}_{N=1}^\I$ converges to  a continuous function, and in fact it converges uniformly on every compact subset of $\R^n\times[0,T)$. The convergence and (\ref{Iteration1}) implies that the continuous function
\begin{align}
\mu:=\lim_{N\to\I}U_N
\end{align}
is a mild solution of (\ref{Eqn:Lin}) in the sense of (\ref{Eqn:Milds}). By the estimate (\ref{Est:Global2}) we have
\begin{align}
|\mu(x,t)|\leq e^{t\dot{\CB}V_0}|u_0|.
\end{align}
This implies in particular that $u=0$ whenever $u_0=0$. The uniqueness of solutions in the space $C([0,T);\CE)$ follows from the comparison principle below. In the case that $a(x,t)\geq0$, the nonnegativity of $u$ whenever $u_0\geq0$ is obvious.\QED

\end{proof}

\begin{definition}[Time-dependent potentials]

Assume (\ref{Hyp:Exist}). The \textit{Green operator} for the equation (\ref{Eqn:Lin}) is defined to be
\begin{align}
\CG_a(t)\vp=e^{\int_0^t\dot{\CB}(\Lap+a(\cdot,\t))\,d\t}\vp:=e^{-t}\sum_{k=0}^\I\widehat{\CB}(a(\cdot,t)+1)^k\vp\quad(\vp\in\CE).
\end{align}

\end{definition}

We also have the comparison principle for the case of positive time-dependent potentials.

\begin{theorem}[Comparison principle II] Let $a\in L^1_{loc}(\R^n\times[0,T))$ be such that
\begin{align}
0\leq a(x,t)\leq\LL(t)|x|^\S\quad\mbox{on $\R^n\times[0,T)$},
\end{align}
where $\LL\in B_{loc}([0,T))$ and $\S\in[0,1)$. Assume $u_0,v_0\in\CE$ and $u,v\in C([0,\I);\CE)$ satisfy
\begin{align}
\begin{cases}
\displaystyle
u(t)\leq u_0+\int_0^t\CB((\Lap+a(\cdot,\t))u(\t))\,d\t,\\
\vspace{-10pt}\\
\displaystyle
v(t)\geq v_0+\int_0^t\CB((\Lap+a(\cdot,\t))v(\t))\,d\t
\end{cases}
\end{align}
for all $t>0$. We have
\begin{align}
u_0\leq v_0\quad\Rightarrow\quad u\leq v.
\end{align}

\end{theorem}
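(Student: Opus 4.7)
The plan is to reduce this time-dependent comparison principle to the one already established in Theorem \ref{Comp1} by majorizing the potential $V(x,\t):=a(x,\t)+1$ by a time-independent profile. Fix an arbitrary $T_0\in(0,T)$, set $\LL_0:=\sup_{[0,T_0]}\LL<\I$ (finite since $\LL\in B_{loc}([0,T))$), and define $V_0(x):=\LL_0|x|^\S+1$. Crucially, the hypothesis $a\geq0$ gives $V(\cdot,\t)\geq0$, so $\dot\CB V(\t)$ is a \emph{positive} operator, and the bound $V(\cdot,\t)\leq V_0$ implies $\dot\CB V(\t)f\leq\dot\CB V_0 f$ for every $f\geq0$. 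This monotonicity of the nonlocal source term is the only ingredient of the proof of Comparison Principle I that uses the special form of the potential.

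Following the pattern of Theorem \ref{Comp1}, I set $\mu=e^tu$, $\nu=e^tv$, subtract the two integral inequalities, and define
\begin{equation*}
w(x,t):=\max_{0\leq\t\leq t}(\mu-\nu)_+(x,\t)\geq0,
\end{equation*}
which is monotone nondecreasing in $t$ and lies in $C([0,T_0];\CE)$. Using $(\mu-\nu)(\t)\leq w(\t)$, positivity of $\dot\CB V(\t)$, and the bound $V(\cdot,\t)\leq V_0$, the subtraction of the two integral inequalities yields
\begin{equation*}
(\mu-\nu)(x,t)\leq\int_0^t\bigl(\dot\CB V(\t)w(\t)\bigr)(x)\,d\t\leq\int_0^t(\dot\CB V_0 w)(x,\t)\,d\t,
\end{equation*}
on $\R^n\times[0,T_0]$. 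Taking the running maximum over $[0,t]$ on the left (and using that the right-hand side is already nondecreasing in $t$) produces exactly the Gronwall inequality (\ref{Gronwall}) of the time-independent proof:
\begin{equation*}
w(x,t)\leq\int_0^t(\dot\CB V_0 w)(x,\t)\,d\t.
\end{equation*}

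From this point the argument is identical to that of Theorem \ref{Comp1}: an easy induction, using monotonicity of $(\dot\CB V_0^k w)(x,\t)$ in $\t$, establishes
\begin{equation*}
w\leq\frac{t^{N+1}}{(N+1)!}\dot\CB V_0^N w\quad\text{on }\R^n\times[0,T_0],\quad N\geq1.
\end{equation*}
For any fixed $\t_0\in(0,T_0)$, the function $w(\cdot,\t_0)$ lies in $\CE$, so Theorem \ref{Thm:Green} (convergence of $e^{t\dot\CB V_0}w(\cdot,\t_0)$) forces $\frac{\t_0^k}{k!}\dot\CB V_0^k w(\cdot,\t_0)\to0$ pointwise as $k\to\I$. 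Thus $w(\cdot,\t_0)\equiv0$; since $\t_0\in(0,T_0)$ and $T_0\in(0,T)$ are arbitrary, $w\equiv0$ on $\R^n\times[0,T)$, and $u\leq v$.

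I expect no serious obstacle: the only substantive point is the first paragraph's observation that the nonnegativity of $a$ makes $V$ a nonnegative amplifier, which turns the time-dependent integral inequality into one dominated by the time-independent operator $\dot\CB V_0$; once this reduction is in place, the induction/summation step is literally the proof of Theorem \ref{Comp1}.
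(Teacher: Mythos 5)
Your proof is correct and takes essentially the same route as the paper: both start from the same function $w=\max_{0\le\t\le t}(\mu-\nu)_+$, derive the Gronwall inequality, iterate, and finish by invoking the convergence of the exponential series from Theorem \ref{Thm:Green} to force $w\equiv0$. The only (cosmetic) difference is where the time-dependent potential $V(\cdot,\t)$ is dominated by the time-independent majorant $V_0=\LL_0|\cdot|^\S+1$: you apply the bound $\dot\CB V(\t)f\le\dot\CB V_0 f$ (for $f\ge0$) already at the Gronwall stage, so the rest of the argument literally reuses the proof of Theorem \ref{Comp1}, whereas the paper carries the iterated operators $\widehat{\CB}V(t)^N$ through the induction and invokes the estimate $|\widehat{\CB}V(t)^N g|\le\frac{t^N}{N!}\dot\CB V_0^N|g|$ only at the end when passing to the limit $N\to\I$.
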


\begin{proof}

The proof has the same idea as Theorem \ref{Comp1}. Let $V=a(x,t)+1$ and $\mu=e^tu,\nu=e^tv$ which satisfy
\begin{align*}
\mu\leq u_0+\int_0^t\dot{\CB}V(\t)\mu(\t)\,d\t,\quad\nu\geq v_0+\int_0^t\dot{\CB}V(\t)\nu(\t)\,d\t.
\end{align*}
Define the function $w$ as before:
\begin{align}
w(x,t)=\max_{0\leq\t\leq t}(\mu-\nu)_+\geq0.
\end{align}
Then $w(x,t)$ is increasing in $t$.
As in the proof of Theorem \ref{Comp1}, we get a Gronwall type inequality
\begin{align}
w(t)\leq\int_0^t\dot{\CB}V(\t)w(\t)\,d\t.
\end{align}
Consider the estimates:
\begin{align*}
&w(t)\leq\int_0^t\dot{\CB}V(\t)w(t)\,d\t=\widehat{\CB}V(t)w(t),\\
&w(t)\leq\int_0^t\dot{\CB}V(\t)(\widehat{\CB}V(\t)w(\t))\,d\t\leq\widehat{\CB}V(t)^2w(t),\\
&
\end{align*}
and generally we obtain
\begin{align}
w(t)\leq\widehat{\CB}V(t)^Nw(t)\quad\forall\,N\geq1.
\end{align}
By the convergence of series (\ref{Iteration1}) and (\ref{Est:Global2}) we can conclude that $w(x,t)=0$ for all $x\in\R^n,t>0$. Therefore we obtain $u\leq v$ as desired.\QED

\end{proof}

\section*{Appendix}

\addtocounter{section}{1}
\setcounter{equation}{0}

We consider the equation (\ref{Eqn:LConv}). Assume that $\mathbf{b}$ is $C^1$ in $x$. Then we rewrite this equation in the form
\begin{align}
\begin{cases}
\P_tu-\Lap\P_tu=\Lap u+\N\cdot(\mathbf{b}(x,t)u)+c(x,t)u&x\in\R^n,t>0,\\
\quad u(x,0)=u_0(x)&x\in\R^n,
\end{cases}
\end{align}
where
\begin{align}
c(x,t):=a(x,t)-\N\cdot\mathbf{b}(x,t).
\end{align}
Then we derive the non-local version of this equation by taking $(1-\Lap)^{-1}$, setting $\mu=e^tu$, then applying the integration by parts to get
\begin{align}\label{Eqn:muconv}
\P_t\mu=\dot{\CB}W\mu+\dot{\CD}\mathbf{b}\mu,
\end{align}
where $W=1+c(x,t)$ and $\dot{\CD}\mathbf{b}$ is the convolution multiplication operator given by
\begin{align}
\dot{\CD}\mathbf{b}\vp:=\int_{\R^n}\N_xB(x-y)\cdot\mathbf{b}(y,t)\vp(y)\,dy.
\end{align}

\begin{lemma}\label{Lem:conv}

The kernel function $\N B$ satisfies the the condition
\begin{align}
|\N B(x)|\in L^1_{loc}(\R^n),\quad|\N B(x)|\lesssim e^{-|x|}\quad\mbox{as $|x|\to\I$},
\end{align}
which is precisely (\ref{K}).
\end{lemma}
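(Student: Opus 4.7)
The plan is to reduce everything to a closed-form expression for $\N B$ in terms of a modified Bessel function and then quote its standard asymptotics, in direct parallel to how Lemma \ref{Lem:B2side} is established for $B$ itself.

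First I would compute $\N B$ explicitly. Since $B$ is radial, $B(x)=\phi(|x|)$ with $\phi(r)=r^{1-n/2}K_{n/2-1}(r)=r^{-\nu}K_\nu(r)$ for $\nu:=\tfrac{n}{2}-1$. The classical differentiation identity $\frac{d}{dr}\!\left[r^{-\nu}K_\nu(r)\right]=-r^{-\nu}K_{\nu+1}(r)$ gives $\phi'(r)=-r^{1-n/2}K_{n/2}(r)$, so
\begin{equation*}
\N B(x)=\phi'(|x|)\frac{x}{|x|}=-|x|^{-n/2}K_{n/2}(|x|)\,x,\qquad |\N B(x)|=|x|^{1-n/2}K_{n/2}(|x|).
\end{equation*}
The crucial observation is that $|\N B|$ has exactly the structural form of the Bessel kernel $B$ but with the index raised from $\tfrac{n}{2}-1$ to $\tfrac{n}{2}$, so the analysis underlying Lemma \ref{Lem:B2side} applies verbatim.

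Next I would deduce the two assertions from the standard asymptotics of $K_\mu$. For large argument, $K_\mu(r)\sim\sqrt{\pi/(2r)}\,e^{-r}$, hence $|\N B(x)|\sim c\,|x|^{(1-n)/2}e^{-|x|}\lesssim e^{-|x|}$ when $|x|\geq1$ (since $(1-n)/2\leq 0$ for $n\geq1$), giving the claimed exponential tail bound. For small argument, since $\nu=n/2>0$ in all dimensions $n\geq1$, one has $K_{n/2}(r)\sim\Gamma(n/2)2^{n/2-1}r^{-n/2}$ (the case $n=1$ being exact via $K_{1/2}(r)=\sqrt{\pi/(2r)}e^{-r}$), so $|\N B(x)|\lesssim|x|^{1-n/2}\cdot|x|^{-n/2}=|x|^{1-n}$ near the origin. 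Integrating in spherical coordinates, $\int_{|x|<1}|x|^{1-n}\,dx=\omega_n\int_0^1dr<\I$, which yields $|\N B|\in L^1_{loc}(\R^n)$.

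There is no real obstacle here: the only mild subtlety is the small-$|x|$ behavior in low dimensions, but since $\nu=n/2>0$ for every $n\geq1$, the $K_0$ logarithmic case never arises and a single $|x|^{1-n}$ bound works uniformly. Combining the two regimes produces the pointwise estimate $|\N B(x)|\lesssim|x|^{1-n}\mathbf{1}_{|x|<1}+e^{-|x|}\mathbf{1}_{|x|\geq1}$, from which both conclusions of the lemma follow.
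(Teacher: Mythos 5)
Your proposal is correct and follows essentially the same route as the paper's proof: both differentiate the radial profile to obtain $|\nabla B(x)|=c_n|x|^{1-n/2}K_{n/2}(|x|)$ and then invoke the standard small- and large-argument asymptotics of $K_{n/2}$ to get local integrability and the exponential tail bound. Your write-up merely spells out the recursion identity $\frac{d}{dr}[r^{-\nu}K_\nu(r)]=-r^{-\nu}K_{\nu+1}(r)$ and the spherical-coordinate integration that the paper leaves implicit by citing Aronszajn--Smith.
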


\begin{proof}
Let $r=|x|$. By the results in \cite{AronszajnSmith61}, we have
\begin{align*}
\frac{d}{dr}B(r)=c_{n}r^{1-\frac{n}{2}}K_{\frac{n}{2}}(r)
\end{align*}
and the asymptotic formulas
\begin{align*}
&K_{\frac{n}{2}}(r)\sim r^{-\frac{n}{2}}\quad\mbox{as $r\to0$},\\
&K_{\frac{n}{2}}(r)\sim r^{-\frac{1}{2}}e^{-r}\quad\mbox{as $r\to\I$}.
\end{align*}
Therefore we obtain $|B'|\in L^1_{loc}$ and $|B'|\lesssim e^{-r}$ as $r\to\I$ which implies the statement of this lemma.\QED
\end{proof}

\begin{theorem}

Assume that the potential and the coefficient $\mathbf{b}$ satisfy
\begin{align}
\max\{|a(x,t)|,|\mathbf{b}|(x,t),|\N\cdot\mathbf{b}|(x,t)\}\leq\LL(t)|x|^\S,
\end{align}
where $\LL\in B_{loc}([0,\I))$ and $\S\in[0,1)$. Then for each $u_0\in\CE$ the problem (\ref{Eqn:LConv}) admits a unique global solution.

\end{theorem}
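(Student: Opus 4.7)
The plan is to mirror the proof of the ``Global well-posedness II'' theorem, absorbing the convection term into the estimates via Lemma~\ref{Lem:conv}. First I would set $\mu := e^tu$ and integrate (\ref{Eqn:muconv}) in $t$ to obtain the mild formulation
\begin{align}\label{Eqn:MildConv}
\mu(t) = u_0 + \int_0^t \left[\dot{\CB}W(\t)\mu(\t) + \dot{\CD}\mathbf{b}(\t)\mu(\t)\right]d\t,
\end{align}
where $W := 1 + a(x,t) - \N\cdot\mathbf{b}(x,t)$. The hypotheses on $|a|,|\mathbf{b}|,|\N\cdot\mathbf{b}|$ give
\[
\max\{|W(x,t)|,\,|\mathbf{b}(x,t)|\} \leq 2\LL(t)|x|^\S + 1 =: \O V_0(x,t),
\]
a majorant of the form treated in Section~\ref{Sec:Besconv}, cf.\ (\ref{Hyp:Vtime}).

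The crucial point, supplied by Lemma~\ref{Lem:conv}, is that the kernel $|\N B|$ satisfies the same structural condition (\ref{K}) as $B$. By the remarks attached to Lemma~\ref{Lem:Ex2}, Theorem~\ref{Thm:Exest}, Corollary~\ref{Cor:BVd}, and Theorem~\ref{Thm:Green}, the entire convolution-multiplication framework of Section~\ref{Sec:Besconv} applies verbatim to the auxiliary operator $\dot{\CH}$ with kernel $|\N B|$. Pointwise,
\[
|\dot{\CB}W\vp| \leq \dot{\CB}\O V_0 |\vp|, \qquad |\dot{\CD}\mathbf{b}\vp| \leq \dot{\CH}\O V_0 |\vp|,
\]
and the sum $\dot{\CB}+\dot{\CH}$ is itself a convolution operator whose kernel $B+|\N B|$ still obeys (\ref{K}).

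With these ingredients in hand, I would set up the Picard iteration $U_0 = u_0$,
\[
U_N(t) = u_0 + \int_0^t\left[\dot{\CB}W(\t)U_{N-1}(\t) + \dot{\CD}\mathbf{b}(\t)U_{N-1}(\t)\right]d\t
\]
on any strip $\R^n\times[0,T)$ with $T<\I$. Proceeding exactly as in the proof of ``Global well-posedness II'' but with $\CS\vp := \dot{\CB}\O V_0 |\vp| + \dot{\CH}\O V_0 |\vp|$ in place of $\dot{\CB}V_0$, one obtains by induction the pointwise majorization
\[
|U_N(x,t)| \leq \sum_{k=0}^N \frac{t^k}{k!}\,\CS^k|u_0|(x),
\]
after freezing $\O V_0$ at $\LL_0 := \|\LL\|_{L^\I(0,T)}$. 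The dominating series converges on $\R^n\times[0,T)$, uniformly on compacta, for every $u_0\in\CE$, by applying the extension of Theorem~\ref{Thm:Green} to the single convolution operator with kernel $B+|\N B|$. Hence $U_N$ converges to a continuous function $\mu$ solving (\ref{Eqn:MildConv}), and $u = e^{-t}\mu$ is the required global solution, which clearly lies in $C([0,\I);\CE)$.

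Uniqueness is handled exactly as in Comparison Principle~II: for two solutions $\mu_1,\mu_2$ of (\ref{Eqn:MildConv}) with the same initial data, the increasing function $w(x,t):=\max_{0\leq\t\leq t}|\mu_1-\mu_2|(x,\t)$ satisfies the Gronwall-type inequality $w(t) \leq \int_0^t\CS w(\t)\,d\t$, which iterates to $w \leq (t^N/N!)\,\CS^N w$ for every $N$; convergence of $\sum_k(t^k/k!)\CS^k w(\cdot,\t_0)$ then forces $w\equiv0$. The main obstacle in the whole argument is the combinatorial bookkeeping for iterates of the two-term operator sum $\dot{\CB}W + \dot{\CD}\mathbf{b}$, but positivity of the dominating kernels together with the unified pointwise bound by $\CS$ reduces everything to the single-operator analysis already carried out in Section~\ref{Sec:Besconv}.
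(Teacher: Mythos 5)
Your argument is correct and is essentially the paper's proof: both combine the two operators into a single dominating convolution--multiplication operator with kernel $B+|\N B|$ (which satisfies (\ref{K}) by Lemma~\ref{Lem:conv}), use a common power majorant for $|W|$ and $|\mathbf{b}|$, and then invoke the Section~\ref{Sec:Besconv} machinery. The only cosmetic difference is that you spell out the uniqueness via the Gronwall iteration $w\leq(t^N/N!)\CS^N w$, whereas the paper leaves this as ``a similar estimate.''
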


\begin{proof}

We introduce the operator
\[
\CM(t)\vp(t)=\int_0^t\dot{\CB}W(\cdot,\t)\vp(\t)\,d\t+\int_0^t\dot{\CD}\mathbf{b}(\cdot,\t)\vp(\t)\,d\t
\]
and its iterations
\[
\CM(t)^N\vp(t):=\CM(t)^{N-1}(\CM(t)\vp(t))\quad(N\geq2).
\]
So the solution $\mu$ of (\ref{Eqn:muconv}) satisfies
\[
\mu(t)=u_0+\CM(t)\mu(t)\quad\mbox{for all $t>0$}.
\]
We introduce the iteration
\begin{align*}
&U_1(t)=u_0+\CM(t)u_0,\\
&U_2(t)=u_0+\CM(t)U_1(t),\\
&\hphantom{U_2(t)}=u_0+\CM(t)u_0+\CM(t)^2u_0,\\
&U_N(t)=u_0+\CM(t)U_{N-1}\quad(N\geq2),
\end{align*}
hence
\begin{align*}
U_N(t)=\sum_{k=0}^N\CM(t)^ku_0.
\end{align*}

The operator $\CM(t)$ has the following estimate
\begin{align*}
|\CM(t)\vp(t)|&\leq\int_0^t\dot{\CH}V(\cdot,\t)|\vp(\t)|\,d\t=\widehat{\CH}V(\cdot,t)|\vp|(t),
\end{align*}
where $\CH$ is the convolution operator with kernel function
\[
H(x):=B(x)+|\N B(x)|,
\]
and
\[
V:=W+|\mathbf{b}|.
\]
This implies
\begin{align*}
|\CM(t)^N\vp(t)|\leq\widehat{\CH}V(\cdot,t)^N|\vp|(t)\quad\mbox{for all $N\geq1$}.
\end{align*}
Applying Lemma \ref{Lem:conv} and the fact that $V\lesssim\LL(t)|x|^\S$, we can perform as in the proof of Theorem \ref{Thm:Exest} to conclude the pointwise convergence (uniformly on compact subsets) of 
\[
\mu:=\lim_{N\to\I}U_N.
\]
It then follows by the dominated convergence theorem that $\mu$ is a desired solution. For the uniqueness, one can perform a similar estimate to show that $\mu=0$ is the only solution in the case $u_0=0$.\QED

\end{proof}

\end{document}